 \DeclareMathOperator{\grad}{grad}
 \newcommand{\dif}{\mathrm{d}}
 \newcommand{\R}{\mathbb{R}}
 \newcommand{\ROM}[1]{\mathrm{\uppercase\expandafter{\romannumeral#1}}}
  \theoremstyle{definition}
 \newtheorem{theorem}{Theorem}[section]
 \newtheorem{lemma}{Lemma}[section]
 \newtheorem{corollary}{Corollary}[section]
 \newtheorem{remark}{Remark}[section]
 \newtheorem{proposition}{Proposition}[section]
\title[Compact embedded hypersurfaces]{\textbf{Compact embedded hypersurfaces with constant higher order anisotropic mean curvatures}}
\author[Y. J. He]{Yijun He}\address{School of Mathematical Sciences,
Shanxi University, Taiyuan 030006, P. R. China.}
\thanks {The first author was partially supported by Youth Science Foundation
of Shanxi Province, China (Grant No. 2006021001).}
\email{heyijun@sxu.edu.cn}
 \author[H. Li]{Haizhong
Li}\address{Department of Mathematical Sciences, Tsinghua
University, Beijing 100084, P. R.
China.}\email{hli@math.tsinghua.edu.cn}
\thanks {The second author was partially supported by the grant No. 10531090 of the NSFC and by
SRFDP.}
 \author[H. Ma]{Hui Ma}\address{Department of Mathematical Sciences, Tsinghua
University, Beijing 100084, P. R.
China.}\email{hma@math.tsinghua.edu.cn}
 \thanks{The third author was partially supported by NSFC grant
No. 10501028 and NKBRPC No. 2006CB805905.}
\author[J. Q. Ge]{Jianquan Ge}\address{Department of Mathematical Sciences, Tsinghua
University, Beijing 100084, P. R.
China.}\email{gejq04@mails.tsinghua.edu.cn} \subjclass[2000]{Primary
53C40; Secondary 53A10, 52A20.}
\date{}
\keywords{Alexandrov Theorem, Wulff
shape, embedded hypersurface, $r$-th anisotropic mean curvature.}
\begin{document}
\maketitle
\begin{abstract}
 Given a positive function $F$ on $S^n$ which satisfies a convexity
condition, for $1\leq r\leq n$, we define the $r$-th anisotropic
mean curvature function $H^F_r$ for hypersurfaces in
$\mathbb{R}^{n+1}$ which is a generalization of the usual $r$-th
mean curvature function. We prove that a compact embedded
hypersurface without boundary in $\R^{n+1}$ with
$H^F_r=\mbox{constant}$ is the Wulff shape, up to translations and
homotheties. In case $r=1$, our result is the anisotropic version of
Alexandrov Theorem, which gives an affirmative answer to an open
problem of F. Morgan.
\end{abstract}

%------------------------------------------------------------------------
\section{Introduction}
 Let $F\colon S^n\to\mathbb{R}^+$
be a smooth function which satisfies the following convexity
condition:
\begin{equation}\label{1}
(D^2F+FI)_x>0,\quad\forall x\in S^n,
\end{equation}
where $S^n$ is the standard unit sphere in $\R^{n+1}$, $D^2F$
denotes the intrinsic Hessian of $F$ on $S^n$ and $I$ denotes the
identity on $T_x S^n$, $>0$ means that the matrix is positive
definite. We consider the map
\begin{equation}\label{2}
\begin{array}
  {l}
  \phi\colon S^n\to\mathbb{R}^{n+1},\\
 x\to F(x)x+(\grad_{S^n}F)_x,
\end{array}
\end{equation}
its image $W_F=\phi(S^n)$ is a smooth, convex hypersurface in
$\mathbb{R}^{n+1}$ called the Wulff shape of $F$ (see \cite{BM},
\cite{clarenz}, \cite{M}, \cite{K-Pa1}, \cite{K-Pa2}, \cite{K-Pa3},
\cite{K-Pa4}, \cite{palmer}, \cite{Taylor}, \cite{winklmann}). When
$F\equiv1$, the Wulff shape $W_F$ is just $S^n$.

Now let $X\colon M\to\mathbb{R}^{n+1}$ be a smooth immersion of a
compact, orientable hypersurface without boundary. Let $\nu\colon
M\to S^n$ denote its Gauss map.

 Let $A_F=D^2F+FI$, $S_F=-\dif(\phi\circ\nu)=-A_F\circ\dif \nu$. $S_F$ is
called the $F$-Weingarten operator, and the eigenvalues of $S_F$ are
called anisotropic principal curvatures. Let $\sigma_r$ be the
elementary symmetric functions of the anisotropic principal
curvatures $\lambda_1, \lambda_2, \cdots, \lambda_n$:
$$\sigma_r=\sum_{i_1<\cdots<i_r}\lambda_{i_1}\cdots\lambda_{i_r}\quad (1\leq r\leq n).$$
We set $\sigma_0=1$. The $r$-th anisotropic mean curvature $H^F_r$
is defined by $H^F_r=\sigma_r/C^r_n$, also see Reilly \cite{reilly}.
$H^F=H^F_1$ is called the anisotropic mean curvature. If $F\equiv1$,
then $H^F_r=H_r$ is just the $r$-th mean curvature of hypersurfaces
which has been studied by many authors (see \cite{CL}, \cite{Li},
\cite{MR}, \cite{Ro}). Thus, the $r$-th anisotropic mean curvature
$H^F_r$ generalized the $r$-th mean curvature $H_r$ of hypersurfaces
in the $(n+1)$-dimensional Euclidean space $\R^{n+1}$.

For hypersurfaces in $\R^{n+1}$, we have the following classical
Alexandrov Theorem which was proved first by Alexandrov in \cite{A}
and later by Reilly in \cite{reilly2}, Montiel-Ros in \cite{MR} and
Hijazi-Montiel-Zhang in \cite{HMZ}:
\begin{theorem}\label{tm1.0} (Alexandrov Theorem)
Let $X\colon M\to\mathbb{R}^{n+1}$ be a compact hypersurface without
boundary embedded in Euclidean space. If $H=\mbox{constant}$,
 then $X(M)$ is a sphere.
\end{theorem}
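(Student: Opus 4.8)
The plan is to avoid Alexandrov's moving-plane reflections, whose reliance on a hyperplane symmetry will not survive the passage to the anisotropic functionals $H^F_r$ treated in the sequel, and instead to argue by integral formulas in the spirit of Montiel--Ros \cite{MR}. Since $X$ is a compact embedding, the Jordan--Brouwer separation theorem provides a bounded domain $\Omega\subset\R^{n+1}$ with $X(M)=\partial\Omega$; I orient $M$ by the inner unit normal $\nu$ and write $H=\tfrac1n(\kappa_1+\cdots+\kappa_n)$ for the mean curvature with respect to $\nu$.

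First I would verify that the constant $H$ is positive: at a point of $M$ farthest from the origin the hypersurface lies in the closed ball of that radius $R$, so every principal curvature there is at least $1/R$, hence $H>0$ at that point and thus $H\equiv c>0$ everywhere. Next I would invoke two classical identities. The divergence theorem applied to the position vector field gives $\int_M\langle X,\nu\rangle\,dA=-(n+1)\operatorname{Vol}(\Omega)$, while the first Hsiung--Minkowski formula $\int_M(1+H\langle X,\nu\rangle)\,dA=0$, together with $H\equiv c$, forces $\operatorname{Area}(M)=c\,(n+1)\operatorname{Vol}(\Omega)$.

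The analytic core is the Heintze--Karcher inequality: for a bounded domain whose boundary has positive mean curvature, $\int_M H^{-1}\,dA\ge (n+1)\operatorname{Vol}(\Omega)$, with equality \emph{only} when the boundary is a round sphere. Since $H\equiv c$ the left side equals $\operatorname{Area}(M)/c$, so this reads $\operatorname{Area}(M)\ge c\,(n+1)\operatorname{Vol}(\Omega)$ --- and the Minkowski computation above shows the inequality is in fact an equality. Therefore the rigidity case of Heintze--Karcher applies, $\Omega$ is a ball, and $X(M)$ is a sphere.

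I expect the real obstacle to be the Heintze--Karcher inequality itself, and above all its equality discussion. Proving it requires either a comparison estimate on the inner normal exponential map of $\Omega$, bounding the first focal distance along each normal geodesic by $1/H$ and integrating the resulting volume estimate over $M$, or, following Reilly, solving $\Delta f=1$ in $\Omega$ with $f|_{M}=0$ and extracting both the inequality and the rigidity $\nabla^2 f=\tfrac{1}{n+1}g$ (whence $\Omega$ is a ball) from his integral formula by a trace and Cauchy--Schwarz argument; either way, showing that boundary equality propagates into all of $\Omega$ is the delicate point. The Minkowski identity, by contrast, is merely the device that detects when equality is attained. I would keep Reilly's self-contained argument \cite{reilly2}, which sidesteps Heintze--Karcher, in reserve as a cross-check.
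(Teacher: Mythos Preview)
Your proposal is correct and follows essentially the same route as the paper: Theorem~\ref{tm1.0} is not proved directly but is recovered as the case $F\equiv 1$, $r=1$ of Theorem~\ref{tm1.2}, whose proof combines the Minkowski identity (Theorem~\ref{th1}) with the anisotropic Heintze--Karcher inequality (Theorem~\ref{thma}), the latter established via the inner normal exponential map and the focal bound $c(p)\le 1/\lambda_{\max}$---precisely the first of the two strategies you sketch for the equality case.
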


Following from a modification of Reilly's proof, Ros showed in
\cite{Ro1} that the sphere is the only compact embedded hypersurface
without boundary with constant scalar curvature in $\R^{n+1}$, which
gave a partial answer to Yau's conjecture \cite{Yau}. Thereafter,
Ros \cite{Ro} extended his result to any $r$-th mean curvature, and
later, Montiel and Ros gave another proof in \cite{MR}. Explicitly,
they proved:
 \begin{theorem}
 \label{tm1.1}
  (\cite{MR},
\cite{Ro}) Let $X\colon M\to\mathbb{R}^{n+1}$ be a compact
hypersurface without boundary embedded in Euclidean space. If
$H_r=\mbox{constant}$ for some $r=1, \cdots, n$, then $X(M)$ is a
sphere.
\end{theorem}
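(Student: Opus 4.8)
The plan is to establish this classical statement (the isotropic case $F\equiv 1$) by the integral-geometric method of Montiel--Ros, combining the Heintze--Karcher inequality with the Hsiung--Minkowski formulas and the Newton--Maclaurin inequalities; for $r=1$ it recovers Theorem~\ref{tm1.0}, but the argument below works uniformly in $r$. Since $M$ is compact and embedded, the Jordan--Brouwer separation theorem lets me regard $X(M)=\partial\Omega$ as the boundary of a compact domain $\Omega\subset\R^{n+1}$. I fix $\nu$ to be the inner unit normal and normalize the principal curvatures accordingly, so that the divergence theorem reads $\int_M\langle X,\nu\rangle\,dA=-(n+1)\,\mathrm{Vol}(\Omega)$.

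First I would pin down the sign of the curvatures. Compactness forces an elliptic point, e.g.\ a point of $M$ farthest from the origin, where all principal curvatures have the same sign; choosing the orientation makes them all positive there, so the constant $H_r$ is positive. Using the connectedness of $M$ and G\r{a}rding's theory of hyperbolic polynomials (the principal-curvature vector lies in the positive G\r{a}rding cone $\Gamma_r$), I would upgrade this to $H_j>0$ on all of $M$ for every $1\le j\le r$.

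Next I would assemble the three ingredients. (i) The Heintze--Karcher inequality: since $H_1>0$,
\[
(n+1)\,\mathrm{Vol}(\Omega)\le\int_M\frac{1}{H_1}\,dA,
\]
with equality if and only if $X(M)$ is a round sphere. (ii) The Hsiung--Minkowski formula $\int_M\bigl(H_{r-1}+H_r\langle X,\nu\rangle\bigr)\,dA=0$, which together with the divergence identity above and the constancy of $H_r$ gives $\int_M H_{r-1}\,dA=(n+1)\,H_r\,\mathrm{Vol}(\Omega)$. (iii) The Newton inequalities $H_k^2\ge H_{k-1}H_{k+1}$: with all $H_j>0$ the ratios $H_k/H_{k-1}$ are nonincreasing in $k$, whence $H_1\ge H_r/H_{r-1}$, i.e.\ $1/H_1\le H_{r-1}/H_r$.

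Finally I would chain these together. Substituting (iii) and then (ii) into (i) yields
\[
(n+1)\,\mathrm{Vol}(\Omega)\le\int_M\frac{1}{H_1}\,dA\le\frac{1}{H_r}\int_M H_{r-1}\,dA=(n+1)\,\mathrm{Vol}(\Omega),
\]
so every inequality is forced to be an equality; equality in the Heintze--Karcher inequality then compels $X(M)$ to be a round sphere. The step I expect to be the main obstacle is the global positivity of the intermediate curvatures $H_1,\dots,H_{r-1}$: for $r\ge 2$ the constancy of $H_r$ does not by itself control the lower-order symmetric functions, and the elliptic-point argument only produces a single point where all of them are positive. Propagating positivity to all of $M$ genuinely requires the G\r{a}rding-cone structure together with the connectedness of $M$, and it is precisely this ellipticity that is needed to legitimize both the Newton inequalities and the Heintze--Karcher comparison.
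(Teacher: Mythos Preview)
Your proposal is correct and follows essentially the same route as the paper, which obtains Theorem~\ref{tm1.1} as the $F\equiv 1$ specialization of Theorem~\ref{tm1.2}: G{\aa}rding's cone argument (Lemma~\ref{llem1}) to secure $H_1,\dots,H_{r-1}>0$, the Hsiung--Minkowski identities (Theorem~\ref{th1}), and the Heintze--Karcher inequality (Theorem~\ref{thma} with $F\equiv 1$), chained so that equality is forced. The only cosmetic difference is that the paper uses the Maclaurin-type inequalities $(H_r)^{1/r}\le H_1$ and $H_{r-1}\ge (H_r)^{(r-1)/r}$ separately, whereas you invoke the equivalent Newton form $H_1\ge H_r/H_{r-1}$; the logic and the equality case are identical.
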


In this paper, we prove the following anisotropic version
 of Theorem \ref{tm1.1}:
 \begin{theorem}
 \label{tm1.2} Let $X\colon M\to\mathbb{R}^{n+1}$ be a compact
hypersurface without boundary embedded in Euclidean space. If
$H^F_r=\mbox{constant}$ for some $r=1, \cdots, n$, then up to
translations, $X(M)=\rho W_F$, where $\rho=-1/H^F_1$ is a constant.
\end{theorem}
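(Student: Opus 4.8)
The plan is to transplant the Montiel--Ros proof of the isotropic Theorem~\ref{tm1.1} (\cite{MR, Ro}) to the anisotropic setting. Four ingredients are needed: a reduction showing that $H^F_r$ is a nonzero constant of a definite sign; an anisotropic Hsiung--Minkowski integral formula; the classical Newton inequalities applied to the anisotropic principal curvatures; and an anisotropic Heintze--Karcher inequality together with its equality case. One then assembles the last three into a chain of inequalities that is forced to collapse to a chain of equalities, and reads off the conclusion from rigidity in Heintze--Karcher. For the reduction: since $X$ embeds a closed hypersurface, $X(M)=\partial\Omega$ for a compact domain $\Omega$ by Jordan--Brouwer separation; translate so that $0\in\mathrm{int}\,\Omega$ and let $\nu$ be the outward normal. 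At a point $p_0$ maximizing $|X|$, $M$ lies inside, and is tangent to, the sphere of radius $|X(p_0)|$, so $\dif\nu$ is positive definite at $p_0$. Since $A_F=D^2F+FI>0$ is symmetric and positive definite, the operator $S_F=-A_F\circ\dif\nu$ is conjugate, via $A_F^{1/2}$, to the symmetric operator $-A_F^{1/2}(\dif\nu)A_F^{1/2}$, which is a congruence of $-\dif\nu$ and hence has the same inertia; thus all anisotropic principal curvatures $\lambda_i(p_0)$ are negative, $\sigma_r(p_0)$ has sign $(-1)^r$, and, $\sigma_r$ being constant, $H^F_r$ is nowhere zero with sign $(-1)^r$ --- the sign it carries on any homothety of the Wulff shape.

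\emph{G{\aa}rding cone, Newton, Minkowski.} Put $\mu_i=-\lambda_i$, and let $\Gamma_r$ be the connected component of $\{\sigma_r>0\}\subset\R^n$ containing the positive orthant, on which $\sigma_1,\dots,\sigma_r$ are all positive. Then $\mu(p_0)$ lies in the positive orthant, hence in $\Gamma_r$, while $\sigma_r(\mu)=(-1)^r\sigma_r(\lambda)$ is a positive constant; since $\partial\Gamma_r\subset\{\sigma_r=0\}$ and $M$ is connected, $\mu(p)\in\Gamma_r$ for every $p\in M$. Therefore $(-1)^kH^F_k>0$ for $1\le k\le r$ throughout $M$ (in particular $H^F_1<0$), and Newton's inequalities give, pointwise,
\[
-\frac{H^F_{r-1}}{H^F_r}\ \ge\ -\frac{1}{H^F_1}\ >\ 0,
\]
with equality at a point iff $\lambda_1=\cdots=\lambda_n$ there. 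Next I would prove the anisotropic Hsiung--Minkowski formula $\int_M\bigl(H^F_{r-1}F(\nu)+H^F_r\langle X,\nu\rangle\bigr)\,\dif A=0$ by integrating $\mathrm{div}\bigl(P_{r-1}(X^\top)\bigr)$, where $P_{r-1}$ is the $(r-1)$-st Newton transformation of $S_F$ and $X^\top$ is the tangential part of the position vector; the coefficient $F(\nu)$ plays the role of the $1$ in the classical formula. Combined with $\int_M\langle X,\nu\rangle\,\dif A=(n+1)\,\mathrm{Vol}(\Omega)$ and the constancy of $H^F_r$, this yields $(n+1)\,\mathrm{Vol}(\Omega)=\int_M\bigl(-H^F_{r-1}/H^F_r\bigr)F(\nu)\,\dif A$.

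\emph{Heintze--Karcher and conclusion.} The last ingredient is the anisotropic Heintze--Karcher inequality: if $H^F_1<0$ on $M$, then
\[
-\int_M\frac{F(\nu)}{H^F_1}\,\dif A\ \ge\ (n+1)\,\mathrm{Vol}(\Omega),
\]
with equality iff $X(M)$ is, up to translation, a homothety of $W_F$. I would establish it by adapting Montiel--Ros's parallel-hypersurface method to the anisotropic normal $\phi\circ\nu$: deform $M$ into $\Omega$ along $-t\,(\phi\circ\nu)$, check that the deformation covers $\Omega$, estimate its Jacobian in terms of the $\lambda_i$, and integrate; equality then forces every $\lambda_i$ to be constant along the deformation lines and all the $\lambda_i$ to coincide, which integrates to a translate of $-(1/H^F_1)\,W_F$ (one also uses $\int_{W_F}F(\nu)\,\dif A=(n+1)\,\mathrm{Vol}(\Omega_F)$, from the divergence theorem together with $\langle X,\nu\rangle=F(\nu)$ on $W_F$). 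Chaining with the previous step,
\[
(n+1)\,\mathrm{Vol}(\Omega)=\int_M\Bigl(-\frac{H^F_{r-1}}{H^F_r}\Bigr)F(\nu)\,\dif A\ \ge\ -\int_M\frac{F(\nu)}{H^F_1}\,\dif A\ \ge\ (n+1)\,\mathrm{Vol}(\Omega),
\]
so both inequalities must be equalities; equality in Heintze--Karcher gives $X(M)=\rho W_F$ up to translation, and evaluating $H^F_1$ on $\rho W_F$ gives $\rho=-1/H^F_1$.

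\emph{Main obstacle.} The real work lies in establishing the anisotropic Heintze--Karcher inequality \emph{with its equality case}: because the anisotropic Gauss map $\phi\circ\nu$ is not a conformal deformation of $M$, the parallel-hypersurface construction, the Jacobian/volume estimate, and above all the rigidity analysis (showing only translated homotheties of $W_F$ are extremal) must be carried out from scratch. A second, lighter but genuinely new difficulty is the inertia argument fixing the sign of $H^F_r$ in the reduction step, together with the sign bookkeeping forced throughout by the fact that $F$ need not be even on $S^n$.
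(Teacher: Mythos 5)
Your proposal is correct and follows essentially the same route as the paper: the Gårding-cone reduction to a definite sign for all $H^F_k$, $k\le r$ (the paper's Lemma~\ref{llem1}), the anisotropic Minkowski formula (Theorem~\ref{th1}, quoted from \cite{HeL1,HeL2}), a Maclaurin-type inequality, and above all the anisotropic Heintze--Karcher inequality with its rigidity case, which is exactly the content of the paper's Sections~3--4 (the $F$-cut locus, the Jacobian $\det(I-tS_F)F(\nu)$, and Theorem~\ref{thma}), and which you correctly single out as the crux. The only cosmetic differences are that the paper works with the inner normal and disposes of the outer-normal case by replacing $F(x)$ with $F(-x)$, whereas you fix the outward normal and track signs throughout, and that the paper uses $(H^F_r)^{1/r}\le H^F_1$ and $(H^F_r)^{(r-1)/r}\le H^F_{r-1}$ in place of your pointwise $H^F_1H^F_{r-1}\ge H^F_r$.
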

\begin{remark}
 For $n=1$, Morgan \cite{M} proved that Theorem \ref{tm1.2} still holds for a more general condition:
 $F$ is only a continuous norm on $\R^2$ and $X\colon M\to\mathbb{R}^2$ is a
 closed curve immersed in $\R^2$. In case $r=1$, Theorem \ref{tm1.2} is actually the
anisotropic version of Alexandrov Theorem, which gives an
affirmative answer to the following open problem proposed by Morgan
in the same paper: Whether an embedded equilibrium, i.e.
hypersurfaces with constant anisotropic mean curvature in Euclidean
space, must be the Wulff shape? We also note that M. Koiso stated
this conjecture in \cite{K}.
\end{remark}
\begin{remark}
  Theorem \ref{tm1.1} follows by choosing $F\equiv1$ in Theorem
  \ref{tm1.2}.
\end{remark}

\section{Preliminaries}
Let $X\colon M\to\mathbb{R}^{n+1}$ be a compact connected
hypersurface immersed in Euclidean space. Let $\nu\colon M\to S^n$
denote its Gauss map. Suppose there exists a point where all the
principal curvatures with respect to $\nu$ are positive. By the
positiveness of $A_F$, all the anisotropic principal curvatures are
positive at this point. Using the results of G{\aa}rding (\cite{G}),
we have the following lemma (cf. Montiel-Ros \cite{MR}):
\begin{lemma}\label{llem1}
Let $X\colon M\to\mathbb{R}^{n+1}$ be a compact connected
hypersurface without boundary. Suppose that there exists a point
where all the principal curvatures are positive. Assume $H^F_r>0$
holds on every point of $M$, then the same holds for $H^F_k$, $k=1,
\cdots, r-1$. Moreover
\begin{equation}
  \label{ll1}
  (H^F_k)^{(k-1)/k}\leq H^F_{k-1},\quad (H^F_k)^{1/k}\leq H^F_1, \quad k=1,
  \cdots, r.
\end{equation}
If $k\geq2$, the equality in the above inequalities happens only at
points where all the anisotropic principal curvatures are equal.
\end{lemma}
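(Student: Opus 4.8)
The plan is to combine a soft connectedness argument on $M$ with G{\aa}rding's theory of hyperbolic polynomials. Recall that for $1\le k\le n$ the G{\aa}rding cone $\Gamma_k\subset\R^n$ is the connected component of $\{\lambda:\sigma_k(\lambda)>0\}$ containing the vector $(1,\dots,1)$; by the results of G{\aa}rding \cite{G} it is an open convex cone, it is characterized by $\Gamma_k=\{\lambda:\sigma_1(\lambda)>0,\dots,\sigma_k(\lambda)>0\}$, one has the inclusions $\Gamma^+\subset\Gamma_r\subset\Gamma_{r-1}\subset\dots\subset\Gamma_1$ where $\Gamma^+=\{\lambda_i>0\ \forall i\}$, and, since $\Gamma_k$ is a connected component of the open set $\{\sigma_k>0\}$ (hence relatively closed in it), its topological boundary satisfies $\partial\Gamma_k\subset\{\sigma_k=0\}$. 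Moreover, on $\overline{\Gamma_r}$ the Maclaurin-type inequalities
\[
\left(\frac{\sigma_k}{C^k_n}\right)^{1/k}\le\left(\frac{\sigma_{k-1}}{C^{k-1}_n}\right)^{1/(k-1)},\qquad 2\le k\le r,
\]
hold, with equality (for $k\ge2$) precisely at points where $\lambda_1=\dots=\lambda_n$; this is a consequence of the concavity of $\sigma_r^{1/r}$ on $\Gamma_r$.

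The first step is to show that the (unordered) $n$-tuple $\lambda(p)=(\lambda_1(p),\dots,\lambda_n(p))$ of anisotropic principal curvatures lies in $\Gamma_r$ for every $p\in M$. Set $U=\{p\in M:\sigma_1(p)>0,\dots,\sigma_r(p)>0\}$, which is open because each $\sigma_k$ is a continuous (indeed smooth) function on $M$. At the point $p_0$ where all principal curvatures are positive, positivity of $A_F$ forces all anisotropic principal curvatures to be positive (the operator $S_F=-A_F\circ\dif\nu$ is conjugate to $A_F^{1/2}(-\dif\nu)A_F^{1/2}>0$), so $\lambda(p_0)\in\Gamma^+\subset\Gamma_r$ and $U\neq\emptyset$. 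To see that $U$ is closed, take $p\in\overline U$; then $\lambda(p)\in\overline{\Gamma_r}$, and if $\lambda(p)\in\partial\Gamma_r$ we would get $\sigma_r(p)=0$, contradicting $H^F_r>0$ on all of $M$; hence $\lambda(p)\in\Gamma_r$ and $p\in U$. Since $M$ is connected, $U=M$, which is exactly the assertion that $H^F_k>0$ on $M$ for $k=1,\dots,r$, in particular for $k=1,\dots,r-1$.

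The inequalities \eqref{ll1} then follow by inserting $\lambda(p)\in\Gamma_r\subset\overline{\Gamma_r}$ into the Maclaurin-type inequalities above. Chaining them from level $k$ down to level $1$ yields $(H^F_k)^{1/k}\le H^F_1$, while raising the single inequality relating levels $k$ and $k-1$ to the power $k-1>0$ yields $(H^F_k)^{(k-1)/k}\le H^F_{k-1}$; the cases $k=1$ are trivial. The rigidity statement is inherited directly from G{\aa}rding's equality characterization: for $k\ge2$, equality at a point $p$ forces $\lambda_1(p)=\dots=\lambda_n(p)$.

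The only genuinely non-elementary ingredient — and thus the main obstacle, should one want a self-contained account — is G{\aa}rding's theory: that $\Gamma_r=\bigcap_{k\le r}\{\sigma_k>0\}$ is a convex cone with $\partial\Gamma_r\subset\{\sigma_r=0\}$, and that $\sigma_r^{1/r}$ is concave on $\Gamma_r$, which is what makes the Maclaurin inequalities and their equality case valid on the whole cone $\Gamma_r$ rather than only on the positive cone $\Gamma^+$ (where they are classical). Granting these facts, as indicated, the proof reduces to the connectedness argument of the second paragraph together with elementary manipulation of exponents, and I would simply cite \cite{G} (cf.\ also \cite{MR}, \cite{Ro}) for the hard analytic input.
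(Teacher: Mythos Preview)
Your proof is correct and is exactly the standard argument the paper has in mind: the paper does not spell out a proof but simply cites G{\aa}rding \cite{G} and Montiel--Ros \cite{MR}, and what you have written is precisely the Montiel--Ros connectedness argument adapted to the anisotropic principal curvatures, together with the G{\aa}rding--Maclaurin inequalities on $\Gamma_r$. Nothing needs to be added or changed.
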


Let $\{e_1,\cdots,e_n\}$ be a local orthogonal frame of $X\colon
M\to\mathbb{R}^{n+1}$, then we have the structure equations:
 \begin{equation}\label{x}
\left\{
\begin{array}
  {l}
  \dif X=\sum_i\omega_ie_i\\
  \dif \nu=-\sum_{ij}h_{ij}\omega_je_i\\
  \dif
  e_i=\sum_j\omega_{ij}e_j+\sum_jh_{ij}\omega_j\nu\\
  \dif\omega_i=\sum_j\omega_{ij}\wedge\omega_j\\
  \dif\omega_{ij}-\sum_k\omega_{ik}\wedge\omega_{kj}=
  -\frac12\sum_{kl}R_{ijkl}\theta_k\wedge\theta_l
\end{array}\right.
\end{equation}
where $\omega_{ij}+\omega_{ji}=0$, $R_{ijkl}+R_{ijlk}=0$, and
$R_{ijkl}$ are the components of the Riemannian curvature tensor of
$M$ with respect to the induced metric $\dif X\cdot \dif X$.

Let $s_{ij}$ denote the coefficient of $S_F$ with respect to $\{e_1,
\cdots, e_n\}$, that is
\begin{equation}\label{xx}
  -\dif(\phi\circ\nu)=-A_F\circ\dif\nu=\sum_{i,j}s_{ij}\omega_je_i,
\end{equation}
where $\phi$ is defined in (\ref{2}).

We call the eigenvalues of $S_F$ to be anisotropic principal
curvatures, and denote them by $\lambda_1, \cdots, \lambda_n$. From
the positive definiteness of $A_F$, there exists a non-singular
matrix $C$ such that $A_F=C^TC$, so $S_F=-A_F\circ\dif\nu$ is
similar to the real symmetric matrix $-C\circ\dif\nu\circ C^T$.
Thus, the anisotropic principal curvatures are all real. Moreover,
if $\lambda_1=\cdots=\lambda_n$, we have $S_F=H^F_1I$, so
$-\dif(\phi\circ\nu)=H^F_1\dif X$ by (\ref{x}) and (\ref{xx}). Thus,
we have the following lemma (cf. \cite{HeL1}, \cite{HeL2},
\cite{palmer}):
\begin{lemma}\label{lemma3.5}
  Let $X\colon M\to\mathbb{R}^{n+1}$ be a compact hypersurface without
boundary. If
$\lambda_1=\lambda_2=\cdots=\lambda_n=\mbox{const}\neq0$, then up to
translations,
  $X(M)=\rho W_F$, where $\rho=-1/H^F_1$.
\end{lemma}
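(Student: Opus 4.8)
The plan is to integrate the identity $-\dif(\phi\circ\nu)=H^F_1\,\dif X$ already recorded in the paragraph preceding the statement, and then to identify the resulting image with the Wulff shape by showing that the Gauss map is onto $S^n$. Denote by $\lambda$ the common value of $\lambda_1,\dots,\lambda_n$; by hypothesis $\lambda$ is a nonzero constant, so $H^F_1=\sigma_1/C^1_n=\lambda$ and $S_F=\lambda\,\Id$, whence $\dif(\phi\circ\nu)=-\lambda\,\dif X$ and hence $\dif(\phi\circ\nu+\lambda X)=0$. Since $M$ is connected (the standing assumption of this section), $\phi\circ\nu+\lambda X$ is a constant vector $c\in\R^{n+1}$. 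Replacing $X$ by $X-c/\lambda$, which is a translation because $\lambda\neq0$, we may assume $\phi\circ\nu=-\lambda X$ on all of $M$, so that $-\lambda X(M)=\phi(\nu(M))$.

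Next I would prove $\nu(M)=S^n$. From $S_F=-A_F\circ\dif\nu=\lambda\,\Id$ and the invertibility of $A_F=D^2F+FI$ (guaranteed by the convexity condition (\ref{1})), we get $\dif\nu=-\lambda A_F^{-1}$, which is a linear isomorphism at every point of $M$; hence $\nu$ is a local diffeomorphism and $\nu(M)$ is open in $S^n$. Being the continuous image of the compact manifold $M$, the set $\nu(M)$ is also closed, and it is nonempty, so the connectedness of $S^n$ forces $\nu(M)=S^n$. Therefore $-\lambda X(M)=\phi(\nu(M))=\phi(S^n)=W_F$, that is, $X(M)=-\tfrac{1}{\lambda}W_F=\rho W_F$ with $\rho=-1/\lambda=-1/H^F_1$; together with the translation performed above this is exactly the assertion.

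The argument is short, and the only point that needs care is the surjectivity of $\nu$. I would obtain it from the fact that $\nu$ is a local diffeomorphism (so $\nu(M)$ is both open and closed, hence all of $S^n$) rather than from the general fact that the Gauss map of a closed hypersurface in $\R^{n+1}$ is onto, since this route uses only $\lambda\neq0$ and the convexity of $F$ and avoids any bookkeeping of orientation conventions.
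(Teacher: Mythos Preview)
Your proof is correct and follows exactly the approach the paper sketches: the paper derives the identity $-\dif(\phi\circ\nu)=H^F_1\,\dif X$ immediately before the lemma and then defers to the references \cite{HeL1}, \cite{HeL2}, \cite{palmer} for the remainder, and your argument is precisely the natural completion of that sketch---integrate the identity on the connected manifold $M$, translate, and use that $\dif\nu=-\lambda A_F^{-1}$ is invertible to show $\nu(M)=S^n$. There is nothing to add; your handling of the surjectivity of $\nu$ via the open--closed argument is clean and matches what the cited sources do.
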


We define $s_{ijk}$ by
\begin{equation}
  \label{xxx}
  \dif
  s_{ij}+\sum_ks_{ik}\omega_{kj}+\sum_ks_{kj}\omega_{ki}=\sum_ks_{ijk}\omega_k.
\end{equation}

Taking exterior differentiation of (\ref{xx}) and using (\ref{x}),
we get
\begin{equation}\label{xv}
  s_{ijk}=s_{ikj}.
\end{equation}

\begin{lemma}\label{lemmax}
 Let $X\colon M\to\mathbb{R}^{n+1}$ be a compact hypersurface without
boundary. If $n\geq 2$ and
$\lambda_1=\lambda_2=\cdots=\lambda_n\neq0$, then
$\lambda_1=\lambda_2=\cdots=\lambda_n=\mbox{const}$, so up to
translations, $X(M)=\rho W_F$, where $\rho=-1/H^F_1$.
\end{lemma}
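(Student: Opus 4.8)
The plan is to mimic the classical argument showing that a connected totally umbilical hypersurface of $\R^{n+1}$ with $n\ge2$ has constant umbilicity factor, the Codazzi equation being replaced here by the symmetry (\ref{xv}) of the tensor $s_{ijk}$.

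First I would record that, since $\lambda_1=\cdots=\lambda_n$ at every point, the discussion preceding Lemma \ref{lemma3.5} gives $S_F=H^F_1I$, that is, $s_{ij}=\lambda\,\delta_{ij}$ with respect to the frame $\{e_1,\dots,e_n\}$, where $\lambda:=H^F_1$ is a smooth function on $M$. Substituting $s_{ij}=\lambda\,\delta_{ij}$ into the defining relation (\ref{xxx}) for $s_{ijk}$, the two connection terms collapse to $\lambda(\omega_{ij}+\omega_{ji})=0$, so that, writing $\dif\lambda=\sum_k\lambda_k\,\omega_k$, one reads off $s_{ijk}=\lambda_k\,\delta_{ij}$.

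Next I would invoke the symmetry $s_{ijk}=s_{ikj}$ from (\ref{xv}), which now says $\lambda_k\,\delta_{ij}=\lambda_j\,\delta_{ik}$ for all $i,j,k$. This is the step where the hypothesis $n\ge2$ is essential: given any index $k$, one may choose an index $i$ with $i\ne k$ (possible precisely because there are at least two indices) and set $j=i$; the left-hand side is then $\lambda_k$ and the right-hand side is $0$, whence $\lambda_k=0$. Since $k$ is arbitrary, $\dif\lambda\equiv0$, so $\lambda$ is constant on the connected hypersurface $M$, and this constant is nonzero because $\lambda=\lambda_1\ne0$ by hypothesis.

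Finally, we have arrived at $\lambda_1=\cdots=\lambda_n=\lambda=\mbox{const}\ne0$, and Lemma \ref{lemma3.5} applies verbatim to give $X(M)=\rho W_F$ up to translations, with $\rho=-1/H^F_1$. I do not expect any real obstacle: the only points needing a little care are the cancellation that produces $s_{ijk}=\lambda_k\,\delta_{ij}$ and the observation that the deduction $\lambda_k=0$ genuinely breaks down when $n=1$ (a single index), which is precisely why the case of plane curves must be treated separately, as noted in the introduction.
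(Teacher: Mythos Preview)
Your proposal is correct and follows essentially the same route as the paper: both use $s_{ij}=\lambda\,\delta_{ij}$ together with the Codazzi-type symmetry (\ref{xv}) to force $\dif\lambda=0$, and then invoke Lemma~\ref{lemma3.5}. The only cosmetic difference is that you compute $s_{ijk}=\lambda_k\,\delta_{ij}$ explicitly and pick specific indices, whereas the paper traces over $j$ to obtain $e_i(H^F_1)=\sum_j s_{jij}=\sum_j s_{jji}=n\,e_i(H^F_1)$ directly.
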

\begin{proof}
From (\ref{xv}) and $s_{ji}=H^F_1\delta_{ij}$, we have
$$e_i(H^F_1)=\sum_js_{jij}=\sum_js_{jji}=ne_i(H^F_1),\quad 1\leq
i\leq n.$$
  Therefore $\lambda_1=\lambda_2=\cdots=\lambda_n=H^F_1$ is a
  constant, then the conclusion follows from Lemma \ref{lemma3.5}.
\end{proof}

 We define
$F^*\colon \R^{n+1}\to\R$ to be (see \cite{BM}):
\begin{equation}\label{4}
  F^*(x)=\sup\{\dfrac{\langle x, z\rangle}{F(z)}|\; z\in S^n\},
\end{equation}
\begin{proposition}
  \label{npro1}
  Let $x\in\R^{n+1}\setminus\{0\}$, $y, z\in S^n$, then we have:
  \begin{enumerate}
    \item $\langle\phi(y), z\rangle\leq F(z)$, and the equality holds if and
    only if $y=z$;
    \item $\langle x, y\rangle\leq F^*(x)F(y)$, and the equality holds if and
    only if $x=F^*(x)\phi(y)$.
  \end{enumerate}
\end{proposition}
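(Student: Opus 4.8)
The plan is to prove the two assertions separately, deducing each from the critical‑point structure of an auxiliary function on $S^n$ together with the convexity hypothesis $A_F>0$.

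For (i), fix $y\in S^n$ and set $g(z)=F(z)-\langle\phi(y),z\rangle$ on $S^n$. Since $(\grad_{S^n}F)_y$ is tangent to $S^n$, the Euler relation gives $\langle\phi(y),y\rangle=F(y)$, so $g(y)=0$; and since the restriction of the linear function $z\mapsto\langle\phi(y),z\rangle$ to $S^n$ has intrinsic Hessian $-\langle\phi(y),z\rangle I$, one computes $\grad_{S^n}g(y)=0$ and, in a local orthonormal frame, $D^2g=(D^2F+FI)-gI=A_F-gI$, so that $D^2g(y)=A_F(y)>0$. This only gives that $y$ is a strict local minimum, so the issue is to globalize to $g\ge 0$. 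I would do this by restricting $g$ to great circles: along any unit‑speed geodesic $\gamma$ of $S^n$, the function $\psi=g\circ\gamma$ satisfies $\psi''+\psi=\langle A_F\gamma',\gamma'\rangle>0$. Taking the minimizing geodesic from $y$ to an arbitrary $z\ne\pm y$ (so $\psi(0)=\psi'(0)=0$), a one‑dimensional comparison argument---for instance, observing that $u=\psi'\sin t-\psi\cos t$ has $u(0)=0$ and $u'=(\psi''+\psi)\sin t>0$ on $(0,\pi)$, hence $t\mapsto\psi(t)/\sin t$ is strictly increasing there with limit $0$ at $0^{+}$---forces $\psi>0$ on $(0,\pi)$, i.e. $g(z)>0$. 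The antipodal case is immediate since $g(-y)=F(-y)+F(y)>0$, and $z=y$ gives equality; this establishes (i) together with its equality statement. (Alternatively, one can extend $F$ to the $1$‑homogeneous function $\bar F(x)=|x|F(x/|x|)$, use that $A_F>0$ makes $\bar F$ convex with $\grad\bar F|_{S^n}=\phi$, and read (i) off the supporting‑hyperplane inequality $\bar F(z)\ge\langle\grad\bar F(y),z\rangle$ together with strict convexity transverse to the radial direction.)

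For (ii), the inequality $\langle x,y\rangle\le F^*(x)F(y)$ is simply the definition of $F^*(x)$ as a supremum over $S^n$, evaluated at the point $z=y$; here $F^*(x)>0$ because $F>0$ on $S^n$ and $x\ne 0$. For the equality case, $\langle x,y\rangle=F^*(x)F(y)$ says exactly that $y$ realizes this supremum, equivalently that the function $G(z)=\langle x,z\rangle-F^*(x)F(z)$ on $S^n$ attains its maximum value $0$ at $z=y$; hence $\grad_{S^n}G(y)=0$. Since the intrinsic gradient at $y$ of $z\mapsto\langle x,z\rangle$ is $x-\langle x,y\rangle y$, this reads $x-\langle x,y\rangle y=F^*(x)(\grad_{S^n}F)_y$, and substituting $\langle x,y\rangle=F^*(x)F(y)$ yields $x=F^*(x)\bigl(F(y)y+(\grad_{S^n}F)_y\bigr)=F^*(x)\phi(y)$. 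Conversely, if $x=F^*(x)\phi(y)$ then $\langle x,y\rangle=F^*(x)\langle\phi(y),y\rangle=F^*(x)F(y)$ by the Euler relation, so equality holds.

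The only step that is not a routine computation is the globalization in (i): promoting the infinitesimal convexity $D^2g(y)=A_F(y)>0$, which by itself only yields a local strict minimum, to the global inequality $g\ge 0$ with the sharp equality characterization. The geodesic ODE comparison above is the natural device, and the point requiring care is the behavior at geodesic distance $\pi$, where the comparison interval $(0,\pi)$ degenerates and the antipodal case $z=-y$ must be handled directly.
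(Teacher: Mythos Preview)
Your proof is correct. Part~(ii) coincides with the paper's argument essentially verbatim: the inequality is the definition of $F^*$, and the equality case is the first-order condition at the maximizer, rewritten via $\phi$.

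For part~(i), however, you take a genuinely different route. You fix $y$, study the function $g(z)=F(z)-\langle\phi(y),z\rangle$ on $S^n$, check that $y$ is a critical point with $D^2g(y)=A_F(y)>0$, and then globalize by the Sturm-type inequality $\psi''+\psi=\langle A_F\gamma',\gamma'\rangle>0$ along great circles (with the antipode handled separately). The paper instead works on the product $S^n\times S^n$: it maximizes $\Phi(y,z)=\langle\phi(y)-\phi(z),z\rangle$ by compactness, differentiates at the maximum \emph{in the $y$ variable} to obtain $\langle A_F\,dy,\,z_0\rangle=0$ for all tangent $dy$, and concludes from the invertibility of $A_F$ on $T_{y_0}S^n$ that $z_0=\pm y_0$; since $\Phi(z_0,z_0)=0>\Phi(-z_0,z_0)$, the maximum is $0$, and the equality case follows by rerunning the same first-order argument at any point where $\Phi$ vanishes. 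The paper's proof is shorter and purely first-order, needing no second-derivative computation or ODE comparison; your approach (and the $1$-homogeneous convex-extension alternative you sketch) is more constructive and makes the strict inequality for $z\ne y$ visible pointwise, at the cost of more calculation and the separate treatment of the antipodal case.
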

\begin{proof}
  Proof of (\romannumeral1). It is obvious that $\langle\phi(y), z\rangle\leq F(z)$ is
  equivalent to $\langle\phi(y)-\phi(z), z\rangle\leq 0$. The
  function $\Phi\colon S^n\times S^n\to\R$ defined by
  $$\Phi(y, z)=\langle\phi(y)-\phi(z), z\rangle$$
  is smooth, so it attained its maximum at some point $(y_0,
  z_0)$ because $S^n\times S^n$ is compact. By differentiating the
  function $\Phi(y, z)$ with respect to $y$ at the point $(y_0,
  z_0)$, we get
  $$\langle A_F\circ \dif y, z\rangle_{(y_0, z_0)}=0.$$
  Thus, from the
  positiveness of $A_F$, $z_0$ is orthogonal to $S^n$ at the point $y_0$, so, we must have $y_0=\pm z_0$.
  Notice that $\Phi(z_0, z_0)=0$, $\Phi(-z_0, z_0)=-F(z_0)-F(-z_0)<0$,
  the function $\Phi$ must attain its maximum 0 at the point
  $(z_0, z_0)$, so $\langle\phi(y), z\rangle\leq F(z)$.
 If $\langle\phi(y), z\rangle=F(z)$, then $\Phi$ obtains its
 maximum 0 at the point $(y, z)$, by the same reason we
 have $y=z$.

 Proof of (\romannumeral2). It is obvious that $\langle x, y\rangle\leq F^*(x)F(y)$ by
 the definition of $F^*$. Now we suppose that $\langle x,
 y\rangle=F^*(x)F(y)$, then the function $\langle x-F^*(x)\phi(y),
 y\rangle$ obtains its maximum 0 at the point $(x, y)$. So,
 differentiating it with respect to $y$, we get
 $$\langle x-F^*(x)\phi(y),
 \dif y\rangle=0.$$
 Thus, it follows that $x-F^*(x)\phi(y)$ is orthogonal to $S^n$ at $y$, that is, $x-F^*(x)\phi(y)=ky$ for some $k$. Then from $\langle x-F^*(x)\phi(y),
 y\rangle=0$, we have $x-F^*(x)\phi(y)=0$.
\end{proof}
\begin{proposition}\label{pro2}
  We have:
  \begin{enumerate}
    \item $F^*(x)>0, \forall x\in\R^{n+1}\setminus\{0\}$;
    \item $F^*(tx)=tF^*(x), \forall x\in\R^{n+1}, t>0$;
    \item $F^*(x+y)\leq F^*(x)+F^*(y), \forall x, y\in\R^{n+1}$, and the equality holds if and only if $x=0$, or $y=0$ or $x=ky$ for some $k>0$.
    \item $W_F=\{x\in\R^{n+1}|\; F^*(x)=1\}$.
  \end{enumerate}
\end{proposition}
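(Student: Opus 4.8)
The plan is to verify the four assertions in order, using only the definition $(\ref{4})$ of $F^*$, the compactness of $S^n$, and the two parts of Proposition $\ref{npro1}$; each step is short, and the only delicate point is the equality analysis in $(\romannumeral3)$.

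I would first dispose of $(\romannumeral1)$ and $(\romannumeral2)$, which are formal. For $(\romannumeral1)$, given $x\neq 0$ one tests the supremum against the particular unit vector $z=x/|x|$ to get $F^*(x)\geq |x|/F(x/|x|)>0$, since $F$ is positive. For $(\romannumeral2)$, pulling the positive scalar out of $\langle tx,z\rangle=t\langle x,z\rangle$ inside the supremum gives $F^*(tx)=tF^*(x)$ immediately.

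For $(\romannumeral3)$, subadditivity is immediate from $\langle x+y,z\rangle/F(z)=\langle x,z\rangle/F(z)+\langle y,z\rangle/F(z)$ together with the fact that the supremum of a sum is at most the sum of the suprema. The ``if'' directions of the equality statement follow from $(\romannumeral2)$: the cases $x=0$ and $y=0$ are trivial, and if $x=ky$ with $k>0$ then $F^*(x+y)=F^*((k+1)y)=(k+1)F^*(y)=F^*(ky)+F^*(y)$. For the converse, suppose $x,y\neq 0$ and $F^*(x+y)=F^*(x)+F^*(y)$; by compactness the supremum defining $F^*(x+y)$ is attained at some $z_0\in S^n$, and then
$$F^*(x+y)=\frac{\langle x,z_0\rangle}{F(z_0)}+\frac{\langle y,z_0\rangle}{F(z_0)}\leq F^*(x)+F^*(y)=F^*(x+y)$$
forces $\langle x,z_0\rangle=F^*(x)F(z_0)$ and $\langle y,z_0\rangle=F^*(y)F(z_0)$ simultaneously. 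The rigidity part of Proposition $\ref{npro1}(\romannumeral2)$ (applicable since $x\neq 0$ and $y\neq 0$) then gives $x=F^*(x)\phi(z_0)$ and $y=F^*(y)\phi(z_0)$, hence $x=\bigl(F^*(x)/F^*(y)\bigr)y$ with positive ratio by $(\romannumeral1)$. Extracting the two simultaneous equalities from a single maximizer and then correctly invoking this rigidity statement is, I expect, the only place where any care is needed.

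Finally, for $(\romannumeral4)$, the inclusion $W_F\subseteq\{x:F^*(x)=1\}$ follows from Proposition $\ref{npro1}(\romannumeral1)$: for $y\in S^n$ we have $\phi(y)\neq 0$ because $\langle\phi(y),y\rangle=F(y)>0$, and $\langle\phi(y),z\rangle\leq F(z)$ for all $z\in S^n$ with equality precisely at $z=y$, so the supremum defining $F^*(\phi(y))$ equals $1$. Conversely, if $F^*(x)=1$ then $x\neq 0$, the supremum is attained at some $y\in S^n$ with $\langle x,y\rangle=F^*(x)F(y)=F(y)$, and the rigidity part of Proposition $\ref{npro1}(\romannumeral2)$ then forces $x=F^*(x)\phi(y)=\phi(y)\in W_F$.
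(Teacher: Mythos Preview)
Your proposal is correct and follows essentially the same route as the paper: parts $(\romannumeral1)$ and $(\romannumeral2)$ are read off from the definition, part $(\romannumeral3)$ is proved by fixing a maximizer for $F^*(x+y)$ and then invoking the rigidity in Proposition~\ref{npro1}$(\romannumeral2)$ to force $x$ and $y$ to be positive multiples of the same $\phi(z_0)$, and part $(\romannumeral4)$ comes from Proposition~\ref{npro1}. Your treatment of $(\romannumeral4)$ is in fact more explicit than the paper's, which only cites part $(\romannumeral2)$ of Proposition~\ref{npro1}; your use of part $(\romannumeral1)$ for the inclusion $W_F\subseteq\{F^*=1\}$ is the natural argument.
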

\begin{proof}
  (\romannumeral1) and (\romannumeral2) follow from the definition of $F^*$.
  By the definition of $F^*$ and (\romannumeral2) of Proposition \ref{npro1}, we easily get (\romannumeral4). We now prove
  (\romannumeral3). Suppose $x, y\neq0$. Let $z\in S^n$ be such that $F^*(x+y)=\langle x+y, z\rangle/F(z)$, then we have
  $$F^*(x+y)=\langle x+y, z\rangle/F(z)=\langle x, z\rangle/F(z)+\langle y, z\rangle/F(z)\leq F^*(x)+F^*(y),$$
  with the equality holding if and only if $F^*(x)=\langle x, z\rangle/F(z)$ and $F^*(y)=\langle y, z\rangle/F(z)$. So, if the equality holds, then
  from (\romannumeral2) of Proposition \ref{npro1} we have
  $$x=F^*(x)\phi(z), \quad y=F^*(y)\phi(z).$$
  Thus, $x=F^*(x)/F^*(y)y$.
\end{proof}

From Proposition \ref{pro2}, for any
$x\in\mathbb{R}^{n+1}\setminus\{0\}$, we have $x/F^*(x)\in W_F$,
thus there exists a unique $\psi(x)\in S^n$ such that
$x=F^*(x)\phi(\psi(x))$. From the implicit function theorem and the
convexity of $F$, the function
$F^*\colon\mathbb{R}^{n+1}\setminus\{0\}\to\mathbb{R}^+$ and
$\psi\colon\R^{n+1}\setminus\{0\}\to S^n$ are smooth.

\section{$F$-focal point and $F$-cut point}

We define $d_F\colon \R^{n+1}\times\R^{n+1}\to\R$ to be $d_F(x,
y)=F^*(y-x)$, then we have $d_F(x, y)>0$ when $x\neq y$, $d_F(x,
x)=0$ and $d_F(x, z)\leq d_F(x, y)+d_F(y, z)$. Note that in general
$d_F(x, y)\neq d_F(y, x)$; and when $F\equiv1$, $d_F$ is just the
Euclidean distance function $d$.

For every $p\in\R^{n+1}$, let $\exp_p$ be the exponential map in
$\R^{n+1}$ at the point $p$, then $\exp_p(u)=p+u$. So, from the
definition of $d_F$, we have
\begin{equation}\label{n5}
  d_F(p, \exp_p(t\phi(Y)))=t, \ \mbox{for every $Y\in S^n$ and
  $t\in\R^+$}.
\end{equation}

 Now, let $X\colon M\to\R^{n+1}$ be a compact
embedded hypersurface without boundary, and $\nu$ be the unit inner
normal vector field of $M$. For convenience, we identify each point
$p\in M$ with its image $X(p)\in\R^{n+1}$.

For each $y\in\R^{n+1}$, define
\begin{equation}\label{7}
  d_F(M, y)=\inf\{d_F(p, y)|\; p\in M\}.
\end{equation}

 We define a function $c\colon M\to\R^+$ to be
such that $c(p)$ is the greatest $t\in(0, \infty)$ satisfying
$d_F(M, \exp_p(t\phi\circ (\nu(p))))=t$. We call
$\exp_p(c(p)\phi\circ \nu(p))$ the $F$-cut point of $p\in M$.

For $p\in M$, let $\gamma_p$ be the ray $\gamma_p\colon[0,
\infty)\to\R^{n+1}$ defined by:
$$\gamma_p(t)=p+t\phi\circ\nu(p),\quad \forall t\in [0, \infty),$$
and $\Gamma_p=\gamma_p([0, c(p)))$.
 Then we have
\begin{lemma}\label{lem4}
For $p, q\in M$, $p\neq q$, we have
$\Gamma_p\cap\Gamma_q=\emptyset$.
\end{lemma}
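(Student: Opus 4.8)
The plan is to argue by contradiction, mimicking the classical Riemannian proof that cut loci of distinct points (here, of distinct points of $M$ along the inner anisotropic normal rays) cannot meet before the cut time. Suppose there exist $p \neq q$ in $M$ and a point $y \in \Gamma_p \cap \Gamma_q$. Write $y = \gamma_p(t_1) = \exp_p(t_1 \phi\circ\nu(p))$ with $0 \le t_1 < c(p)$, and $y = \gamma_q(t_2) = \exp_q(t_2 \phi\circ\nu(q))$ with $0 \le t_2 < c(q)$. By \eqref{n5} we have $d_F(p,y) = t_1$ and $d_F(q,y) = t_2$. The first thing I would record is that, since $t_1 < c(p)$, the defining property of $c(p)$ together with \eqref{7} forces $d_F(M,y) = t_1$: indeed $d_F(M,\exp_p(t_1\phi\circ\nu(p))) = t_1$ holds for all $t_1$ up to (and at) $c(p)$ by the definition of $c$, so $d_F(M,y)=t_1$, and similarly $d_F(M,y) = t_2$. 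Hence $t_1 = t_2 =: t$, and since $p \neq q$ we must have $t > 0$ (if $t=0$ then $y=p=q$).

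Next I would extract the minimizing property at $y$: since $d_F(M,y) = t = d_F(p,y) = d_F(q,y)$, both $p$ and $q$ realize the infimum $\inf\{d_F(z,y) : z \in M\}$. The key geometric input is a first-variation / Lagrange-multiplier argument at such a minimizer, analogous to the smooth point $(y_0,z_0)$ analysis in Proposition \ref{npro1}: if $p$ minimizes $z \mapsto d_F(z,y) = F^*(y - z)$ over $z \in M$, then differentiating along $M$ at $p$ and using smoothness of $F^*$ on $\R^{n+1}\setminus\{0\}$ (established after Proposition \ref{pro2}), the gradient of $F^*(y-\cdot)$ at $p$ must be normal to $M$ at $p$. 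Writing $y - p = F^*(y-p)\phi(\psi(y-p)) = t\,\phi(\psi(y-p))$, one checks that $\nabla_x F^*$ at $x = y-p$ is (a positive multiple of) $\psi(y-p) \in S^n$; so $\psi(y-p)$ is normal to $M$ at $p$, i.e. $\psi(y-p) = \pm\nu(p)$. Because $y = \exp_p(t\phi\circ\nu(p)) = p + t\phi(\nu(p))$ with $t>0$, and $F^* > 0$ and $\phi$ is injective on $S^n$, we get $\psi(y-p) = \nu(p)$; symmetrically $\psi(y-q) = \nu(q)$. Combined with $y - p = t\phi(\nu(p))$ and $y - q = t\phi(\nu(q))$, this says that both rays $\gamma_p$ and $\gamma_q$, run backwards from $y$ with the $F^*$-unit backward direction, are the unique $d_F$-minimizing segments from $y$ to $M$.

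Now I would derive the contradiction from the existence of two distinct such minimizers $p \neq q$. Consider moving slightly past $y$ along the ray $\gamma_p$: put $y_s = \gamma_p(t + s)$ for small $s > 0$. On one hand, by the triangle inequality and \eqref{n5}, $d_F(M, y_s) \le d_F(p, y_s) = t + s$ (strict going through $p$). On the other hand I claim $d_F(M, y_s) < t+s$ strictly, which will contradict the definition of $c(p)$ since it would force $c(p) < t + s$ for every small $s$, hence $c(p) \le t$, contradicting $t < c(p)$. To get the strict inequality one uses the second minimizer: the concatenation "$q \rightsquigarrow y \rightsquigarrow y_s$" has $d_F$-length $t + s$ but is not a single straight $d_F$-segment (since the direction $\phi(\nu(q))$ from $q$ to $y$ differs from $\phi(\nu(p))$ from $y$ to $y_s$, as $p\neq q$ forces $\nu(p) \ne \nu(q)$ at these configurations — this is where the strict convexity \eqref{1}, equivalently strict triangle inequality (iii) of Proposition \ref{pro2}, is used), so shortcutting strictly decreases $d_F$-distance to $M$. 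I expect \textbf{this last step — rigorously turning "two distinct minimizers $\Rightarrow$ past the cut point the distance strictly drops" into the strict inequality $d_F(M,y_s) < t+s$, with careful use of the asymmetry of $d_F$ and of the equality case in Proposition \ref{pro2}(iii)} — to be the main obstacle; the earlier identification of minimizing directions via first variation is routine given the smoothness of $F^*$ and $\psi$. Once the contradiction is in hand, $\Gamma_p \cap \Gamma_q = \emptyset$ follows.
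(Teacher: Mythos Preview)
Your proposal is correct and follows essentially the same route as the paper: establish $t_1 = t_2 = t$ via $d_F(M,y)$, then push past $y$ along $\gamma_p$ and use the second minimizer $q$ together with the strict triangle inequality (Proposition~\ref{pro2}(iii)) to get a contradiction.

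Two remarks. First, your entire first-variation paragraph is unnecessary: the identities $y-p = t\,\phi(\nu(p))$ and $y-q = t\,\phi(\nu(q))$ are already part of the hypothesis $y\in\Gamma_p\cap\Gamma_q$, so $\psi(y-p)=\nu(p)$ and $\psi(y-q)=\nu(q)$ hold by definition, not by a Lagrange-multiplier argument. Second, the step you flag as the ``main obstacle'' is dispatched in the paper in one line. Taking any $s$ with $t<s<c(p)$ and writing $y_s=\gamma_p(s)$, one has
\[
d_F(q,y_s) \;<\; d_F(q,y)+d_F(y,y_s) \;=\; t+(s-t) \;=\; s \;=\; d_F\bigl(M,y_s\bigr),
\]
which already contradicts $d_F(M,y_s)\le d_F(q,y_s)$. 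The strictness comes from Proposition~\ref{pro2}(iii): the vectors $y-q=t\,\phi(\nu(q))$ and $y_s-y=(s-t)\,\phi(\nu(p))$ are not positive multiples of each other, since $\phi(\nu(p))=\phi(\nu(q))$ would force $\nu(p)=\nu(q)$ (injectivity of $\phi$) and hence $p=q$. No limiting argument in $s$ or worry about the asymmetry of $d_F$ is needed.
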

\begin{proof}
  Suppose $x\in\Gamma_p\cap\Gamma_q$, then there exists $0<t<\min(c(p),
  c(q))$ such that
  $$x=\exp_p(t\phi\circ\nu(p))=\exp_q(t\phi\circ\nu(q)),$$
  by the definition of $c(p), c(q)$ and (\ref{n5}).

  Suppose $t<s<c(p)$, then from (\romannumeral3) of Proposition \ref{pro2}, we have
  $$\begin{array}
    {rcl}
    d_F(q, \exp_p(s\phi\circ\nu(p)))&<&d_F(q, x)+d_F(x, \exp_p(s\phi\circ\nu(p)))\\
  &=&d_F(p, x)+d_F(x, \exp_p(s\phi\circ\nu(p)))\\
  &=&d_F(p, \exp_p(s\phi\circ\nu(p)))\\
  &=&d_F(M, \exp_p(s\phi\circ\nu(p))),
  \end{array}
  $$
  a contradiction.
\end{proof}

Consider the map: $\Psi\colon M\times\R\to\R^{n+1}$, $\Psi(p,
t)=\exp_p(t\phi\circ\nu(p))$. If $(p, t)$ is a critical point of the
map $\Psi$, then we call $\exp_p(t\phi\circ\nu(p))$ an $F$-focal
point of $p\in M$. Because $\exp_p(t\phi\circ\nu(p))=p+t\phi\circ
\nu(p)$, so through direct computation, we have
\begin{equation}\label{n11}
\dif(\exp_p(t\phi\circ\nu(p)))=(I-tS_F)\circ \dif p+(\phi\circ
\nu(p))\dif t.
\end{equation}
From (\ref{n11}), $\exp_p(t\phi\circ\nu(p))$ is an $F$-focal point
of $p$ if and only if the matrix $I-tS_F$ is degenerate. So, the
first $F$-focal point of $p$ along the ray $\gamma_p$ is
$\exp_p(1/\lambda_{\max}\phi\circ\nu(p))$, where $\lambda_{\max}$ is
the greatest positive anisotropic principal curvature at $p$.
\begin{remark}
  When $F=1$, $F$-cut point, $F$-focal point is the cut point and
  the  focal point of hypersurfaces in the Euclidean space
  respectively.
\end{remark}
\begin{lemma}\label{nnlem1}
  Either $(p, c(p))$ is a critical point of the map $\Psi$, or there
  exists at least one point $q\in M$, $q\neq p$, such that
  $d_F(q, \exp_p(c(p)\phi\circ\nu(p)))=c(p)$.
\end{lemma}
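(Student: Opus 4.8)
The plan is to argue by contradiction. Suppose the conclusion fails for some $p\in M$, and set $x_0:=\exp_p(c(p)\phi\circ\nu(p))$. Then $(p,c(p))$ is not a critical point of $\Psi$, and the only $q\in M$ with $d_F(q,x_0)=c(p)$ is $p$ itself. Since $d_F(M,x_0)=c(p)=d_F(p,x_0)$ — which follows from the definition of $c(p)$ together with \eqref{n5} and the continuity of $y\mapsto d_F(M,y)$ — the second condition says precisely that $p$ is the \emph{unique} minimizer of $q\mapsto d_F(q,x_0)$ over $M$. I would then show that for every $t$ slightly larger than $c(p)$ one still has $d_F(M,\exp_p(t\phi\circ\nu(p)))=t$, which contradicts the maximality of $c(p)$ in its defining supremum.

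The first step is to show that $\Psi$ is a local diffeomorphism near $(p,c(p))$. By \eqref{n11}, $\dif\Psi_{(p,c(p))}(v,\tau)=(I-c(p)S_F)(v)+\tau\,\phi\circ\nu(p)$ for $v\in T_pM$ and $\tau\in\R$. Non-criticality of $(p,c(p))$ means that $I-c(p)S_F$ is an automorphism of $T_pM$; moreover $\phi\circ\nu(p)=F(\nu(p))\nu(p)+(\grad_{S^n}F)_{\nu(p)}$ has a nonzero component along $\nu(p)$ (because $F>0$) while its remaining part lies in $\nu(p)^{\perp}=T_pM$, so $\phi\circ\nu(p)\notin T_pM=(I-c(p)S_F)(T_pM)$. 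Hence $\dif\Psi_{(p,c(p))}$ is an isomorphism $T_pM\oplus\R\to\R^{n+1}$, and the inverse function theorem gives a neighborhood $U$ of $(p,c(p))$ in $M\times\R$ which $\Psi$ maps diffeomorphically onto a neighborhood of $x_0$.

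Next I would suppose, for contradiction with maximality, that there is a sequence $s_k\downarrow c(p)$ with $s_k>c(p)$ and $d_F(M,x_{s_k})<s_k$, where $x_{s_k}:=p+s_k\phi\circ\nu(p)$ (one always has $\leq$ by \eqref{n5}). Pick $q_k\in M$ realizing $d_F(M,x_{s_k})$ (possible by compactness); then $q_k\neq p$. Any subsequential limit $q_\infty$ of $(q_k)$ satisfies $d_F(q_\infty,x_0)=c(p)$ by continuity, hence $q_\infty=p$ by uniqueness, so $q_k\to p$; likewise $d_k:=d_F(q_k,x_{s_k})=d_F(M,x_{s_k})\to d_F(M,x_0)=c(p)$. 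Writing $x_{s_k}-q_k=d_k\,\phi(Y_k)$ with $Y_k=\psi(x_{s_k}-q_k)\to\psi(x_0-p)=\nu(p)$, the first variation of $q\mapsto F^*(x_{s_k}-X(q))$ at the minimizer $q_k$ forces $\nabla F^*(x_{s_k}-q_k)$ to be normal to $M$ at $q_k$; since $\nabla F^*$ is homogeneous of degree $0$ and, using $F^*\equiv1$ on $W_F$ with $T_{\phi(Y)}W_F=Y^{\perp}$, one computes $\nabla F^*(\phi(Y))=Y/F(Y)$, this gives $Y_k=\pm\nu(q_k)$, with sign $+$ for $k$ large because $Y_k\to\nu(p)$ and $\nu(q_k)\to\nu(p)$. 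Thus $x_{s_k}=q_k+d_k\,\phi\circ\nu(q_k)=\Psi(q_k,d_k)$ with $(q_k,d_k)\to(p,c(p))$, while also $x_{s_k}=\Psi(p,s_k)$ with $(p,s_k)\to(p,c(p))$; for $k$ large both pairs lie in $U$, so injectivity of $\Psi|_U$ yields $(q_k,d_k)=(p,s_k)$, contradicting $q_k\neq p$.

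I expect the main obstacle to be the first-variation step of the last paragraph: establishing $\nabla F^*(\phi(Y))=Y/F(Y)$ (equivalently, that the $d_F$-nearest point of $M$ to $x_{s_k}$ is joined to it along the $\phi$-image of the inner normal) and nailing down the sign, together with verifying that $y\mapsto d_F(M,y)$ is continuous and that the supremum defining $c(p)$ is attained. Once these routine facts are in place, the inverse function theorem closes the argument.
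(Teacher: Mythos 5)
Your proposal is correct and follows essentially the same route as the paper: both take points $x_{s}$ with $s\downarrow c(p)$ along the ray, pick $d_F$-minimizers $q_k$ on $M$, use the first-variation fact that a nearest point satisfies $x-q=d_F(q,x)\,\phi\circ\nu(q)$, and contradict local injectivity of $\Psi$ near the non-critical point $(p,c(p))$ when $q_k\to p$ (the paper organizes this as a two-case dichotomy on $\lim q_k$ rather than a single contradiction, but the content is identical). Your explicit verification that $\dif\Psi_{(p,c(p))}$ is invertible and your limit argument for fixing the sign $Y_k=+\nu(q_k)$ are fine and in fact fill in details the paper leaves implicit.
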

\begin{proof}
  We choose $\varepsilon_i>0$, such that
  $\lim_{i\to\infty}\varepsilon_i=0$. Let $a_i=\exp_p((c(p)+\varepsilon_i)\phi\circ
  \nu(p))$, $a=\exp_p(c(p)\phi\circ\nu(p))$. The continuity of $\Psi$
  implies that $\lim_{i\to\infty}a_i=a$. From the definition of
  $c(p)$, there exists points $q_i\in M$, such that
  $d_F(q_i, a_i)=d_F(M, a_i)=c(p)+\varepsilon'_i$, where
  $\varepsilon'_i<\varepsilon_i$, possibly $<0$. From the
  compactness of $M$, there exists a convergent subsequence of $\{q_i\}$, again denoted by $\{q_i\}$ such that
  $\lim_{i\to\infty}q_i=q$. Then we divided into two cases:

  {\it Case 1}. $q\neq p$. In this case we have
  $$\lim_{i\to\infty}d_F(q_i, a_i)=d_F(q, a),$$
  and
  $$\lim_{i\to\infty}d_F(M, a_i)=d_F(M, a)=c(p).$$
  So, we have $d_F(q, a)=c(p)$, as expected.

  {\it Case 2}. $q=p$. Suppose $(p, c(p))$ is not a critical point of the
  map $\Psi$, then there exists a neighborhood $U$ of $(p, c(p))\in
  M\times\R$ such that $\Psi|_U\colon U\to\Psi(U)$ is a
  diffeomorphism. And we have $\lim_{i\to\infty}d_F(q_i, a_i)=d_F(q, a)=c(p)$, so $\lim_{i\to\infty}\varepsilon'_i=0$.
  Therefore, for a sufficient large $i$, we have $(p,
  c(p)+\varepsilon_i), (q_i, c(p)+\varepsilon'_i)\in U$. But
  $$\exp_p((c(p)+\varepsilon_i)\phi\circ\nu(p))=\exp_{q_i}((c(p)+\varepsilon'_i)\phi\circ\nu(q_i)),$$
 thus we have
  $p=q_i$ and $\varepsilon_i=\varepsilon'_i$, a contradiction.
\end{proof}
\begin{lemma}\label{lem5}
  $c(p)\leq 1/\lambda_{\max}$,  where $\lambda_{\max}$ is the greatest positive anisotropic
  principal curvature at $p$.
\end{lemma}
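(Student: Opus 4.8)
The plan is to argue by contradiction: assuming $c(p)>1/\lambda_{\max}$, I will produce a point of $M$ that is strictly $F$-closer to a point of the ray $\gamma_p$ than $p$ itself, contradicting the defining property of $c(p)$. Write $t_0=1/\lambda_{\max}$, suppose $c(p)>t_0$, fix $t_1\in(t_0,c(p))$, and put $a=\gamma_p(t_1)=p+t_1\,\phi(\nu(p))$. A short triangle-inequality argument (as in the proof of Lemma \ref{lem4}, using $d_F(M,z)\le d_F(M,y)+d_F(y,z)$ together with $d_F(\gamma_p(t),\gamma_p(c(p)))=c(p)-t$) first shows $d_F(M,\gamma_p(t))=t$ for every $t\in(0,c(p)]$; in particular $d_F(M,a)=t_1$. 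Since $d_F(M,a)=\inf_{q\in M}F^*(a-q)=\min_{q\in M}h(q)$ where $h(q):=F^*(a-q)$ and $M$ is compact, it suffices to show that $p$ is \emph{not} a local minimum of $h$ on $M$.

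Next I would compute the $1$- and $2$-jets of $h$ at $p$. From Proposition \ref{npro1}(ii) and the envelope description $F^*(x)=\max_{z\in S^n}\langle x,z\rangle/F(z)$ one gets $\nabla F^*(x)=\psi(x)/F(\psi(x))$; since $a-p=t_1\phi(\nu(p))$ forces $\psi(a-p)=\nu(p)$, it follows that $dh_p$ vanishes on $T_pM$, so $p$ is a critical point of $h$. Differentiating $h$ twice along a curve $c(s)$ in $M$ with $c(0)=p$, $c'(0)=w$ yields
\[
\mathrm{Hess}_p h(w,w)=\big\langle D^2F^*(a-p)\,w,\ w\big\rangle-\tfrac{1}{F(\nu(p))}\,\mathrm{II}_p(w,w),
\]
where $D^2F^*$ is the Euclidean Hessian of $F^*$ and $\mathrm{II}_p$ is the second fundamental form of $M$ with respect to the inner normal $\nu$.

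The computational heart is the evaluation of $D^2F^*$ along a Wulff ray. Because $\nabla F^*$ is $0$-homogeneous, $D^2F^*(t_1\phi(Y))=t_1^{-1}D^2F^*(\phi(Y))$; and differentiating the identity $(\nabla F^*)\circ\phi=\big(Y\mapsto Y/F(Y)\big)$, using $d\phi_Y=A_F(Y)$ acting on $T_YS^n$, one finds that on the hyperplane $Y^\perp=T_YS^n$ the operator associated to $D^2F^*(\phi(Y))$ is $F(Y)^{-1}A_F(Y)^{-1}$. Since $\mathrm{II}_p$ corresponds to the shape operator $-d\nu=A_F^{-1}S_F$ (which is just $S_F=-A_F\circ d\nu$ rewritten), putting $Y=\nu(p)$ gives
\[
\mathrm{Hess}_p h(w,w)=\frac{1}{t_1F(\nu(p))}\,\big\langle A_F(\nu(p))^{-1}(I-t_1S_F)\,w,\ w\big\rangle .
\]
Now choose $w=v$ an eigenvector of $S_F$ for $\lambda_{\max}$ (real, as noted earlier); then $(I-t_1S_F)v=(1-t_1\lambda_{\max})v$ with $1-t_1\lambda_{\max}<0$, while $\langle A_F(\nu(p))^{-1}v,v\rangle>0$ by positive definiteness of $A_F$. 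Hence $\mathrm{Hess}_p h(v,v)<0$, so $p$ is not a local minimum of $h$; this contradicts $h(p)=t_1=\min_M h$, and therefore $c(p)\le 1/\lambda_{\max}$.

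The step I expect to be the main obstacle is the identification of $D^2F^*(\phi(Y))$ with $F(Y)^{-1}A_F(Y)^{-1}$ on $Y^\perp$: this is precisely where the Wulff-shape/support-function duality of Propositions \ref{npro1}–\ref{pro2} enters, and one must carefully keep the symmetric bilinear form $D^2F^*$ distinct from the a priori non-self-adjoint operator $S_F$, using that $A_F^{-1}S_F$ is the ordinary self-adjoint shape operator so that the displayed quadratic form really is the Hessian of $h$. (Note also that the statement tacitly assumes a positive anisotropic principal curvature exists at $p$; if every $\lambda_i\le0$ there, the ray $\gamma_p$ has no $F$-focal point and the inequality is vacuous. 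A purely topological route via Lemma \ref{nnlem1} does not suffice, since the case where $(p,c(p))$ is an $F$-focal point only yields $c(p)\in\{1/\lambda_i:\lambda_i>0\}$, not $c(p)\le 1/\lambda_{\max}$.)
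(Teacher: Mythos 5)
Your proposal is correct and follows essentially the same route as the paper: both show that $p$ fails to be a local minimum of $q\mapsto F^*(p+t\,\phi(\nu(p))-q)$ for $t>1/\lambda_{\max}$ by checking criticality and then computing the second derivative along the $\lambda_{\max}$-eigendirection of $S_F$, arriving at the same quadratic form $\tfrac{1}{tF(\nu(p))}(1-t\lambda_{\max})\langle A_F^{-1}v,v\rangle<0$. The only difference is organizational — you compute via the gradient and Hessian of $F^*$ together with homogeneity and the duality identity $\nabla F^*\circ\phi=Y/F(Y)$, whereas the paper differentiates the implicitly defined map $Y(q)$ twice — and your added remarks (the reduction $d_F(M,\gamma_p(t))=t$ for $t\le c(p)$, and the tacit assumption $\lambda_{\max}>0$) are accurate.
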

\begin{proof}
  Let $t>1/\lambda_{\max}$. We define a function $h\colon M\to\R$ by
  \begin{equation}\label{11}
    h(q)=F^*(p+t\phi\circ
    \nu(p)-q), \forall q\in M.
  \end{equation}
  We prove that $p$ is not a local minimum point, so $t>c(p)$, thus the
  conclusion follows.

  Because, $(p+t\phi\circ
    \nu(p)-q)/h(q)\in W_F$, we can define a function $Y\colon M\to S^n$ by
  \begin{equation}\label{12}
    p+t\phi\circ\nu(p)-q=h(q)\phi\circ Y(q),
  \end{equation}
  and we have $Y(p)=\nu(p)$.

  Let $\gamma\colon (-\varepsilon, \varepsilon)\to M$ be a smooth curve such
  that $\gamma(0)=p$, we denote $h(s)=h(\gamma(s))$,
  $\gamma(s)=X(\gamma(s))$, $Y(s)=Y(\gamma(s))$, $\nu(s)=\nu(\gamma(s))$ for simplicity. Then, we
  have
  \begin{equation}\label{13}
 p+t\phi\circ\nu(p)-\gamma(s)=h(s)\phi\circ Y(s).
  \end{equation}
  Differentiating (\ref{13}), we get
  \begin{equation}\label{14}
    h'(s)\cdot\phi(Y(s))+h(s)\cdot (\phi(Y))'(s)=-\gamma'(s).
  \end{equation}
  From $\langle Y, Y\rangle=1$, we have $\langle Y'(s), Y(s)\rangle=0$,
  together with $(\phi(Y))'(s)=A_F\circ Y'(s)$ we have
  \begin{equation}\label{ac}
 \langle (\phi(Y))'(s), Y(s)\rangle=0.
  \end{equation}
  Thus, by taking inner product with $Y(s)$ in (\ref{14}), we get
  \begin{equation}
    \label{aa}
    h'(s)F(Y(s))=-\langle\gamma'(s), Y(s)\rangle.
  \end{equation}
  From (\ref{aa}) and $Y(0)=\nu(p)$, we have $h'(0)=-\langle\gamma'(0), \nu(p)\rangle/F(\nu(p))=0$, so $p$ is a extreme point of the function $h$. And we
  have
  \begin{equation}\label{16}
  t(\phi(Y))'(0)=tA_F\circ Y'(0)=-\gamma'(0),
  \end{equation}
Differentiating (\ref{14}), we get
  \begin{equation}\label{ab}
    h''(s)\cdot\phi(Y(s))+2h'(s)\cdot
    (\phi(Y))'(s)+h(s)(\phi(Y))''(s)=-\gamma''(s).
  \end{equation}
  Differentiating (\ref{ac}), we get
  \begin{equation}\label{19}
  \langle (\phi(Y))''(s), Y(s)\rangle=-\langle (\phi(Y))'(s), Y'(s)\rangle
  \end{equation}
  Thus, by taking inner product with $Y(s)$ in (\ref{ab}) and using (\ref{ac}), (\ref{19}), we get
  \begin{equation}\label{ad}
    h''(s)\cdot F(Y(s))-h(s)\langle A_F\circ Y'(s),
    Y'(s)\rangle=-\langle \gamma''(s), Y(s)\rangle.
  \end{equation}
  Evaluating (\ref{ad}) at $s=0$, using $Y(0)=\nu(p)$ and (\ref{16}) we have
  \begin{equation}\label{20}
    h''(0)\cdot F(\nu(p))-\frac1t\langle A_F^{-1}\circ \gamma'(0),
    \gamma'(0)\rangle=-\langle \gamma''(0), \nu(p)\rangle=\langle \gamma'(0), \nu'(0)\rangle.
  \end{equation}
Now, let $\gamma$ be such a curve that satisfies $-A_F\circ
  \nu'(0)=\lambda_{\max}\gamma'(0)$, that is, $\gamma'(0)$ is the
  eigenvector corresponding to the maximum positive eigenvalue of
  $S_F=-A_F\circ \dif\nu$. Then, we have
  \begin{equation}\label{21}
    h''(0)=\dfrac{\lambda_{\max}}{tF(\nu(p))}(\dfrac1{\lambda_{\max}}-t)\langle
    A_F^{-1}\circ\gamma'(0), \gamma'(0)\rangle<0,
  \end{equation}
  because $A_F^{-1}$ is positive definite.

  So, $p$ is not a local minimum point of the function $h$.
\end{proof}
\begin{lemma}\label{nnlem2}
  The map $c\colon M\to\R^+$ is continuous.
\end{lemma}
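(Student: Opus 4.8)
The plan is to prove that $c$ is both upper and lower semicontinuous on $M$; throughout I use three elementary observations. (a) The function $y\mapsto d_F(M,y)$ is continuous on $\R^{n+1}$, being the infimum over the compact set $M$ of the jointly continuous map $(p,y)\mapsto F^*(y-p)$. (b) Monotonicity: $d_F(M,\gamma_p(s))=s$ for every $0<s<c(p)$; indeed $\{t>0:d_F(M,\gamma_p(t))=t\}$ is closed by (a), so it contains $c(p)$, and if some $r\in M$ had $d_F(r,\gamma_p(s))<s$ then the triangle inequality for $d_F$ would give $d_F(r,\gamma_p(c(p)))<c(p)$, a contradiction. In particular $\gamma_p((0,c(p)))\cap M=\emptyset$, since $\gamma_p(c)\in M$ with $0<c<c(p)$ would force $d_F(M,\gamma_p(s))\le d_F(\gamma_p(c),\gamma_p(s))=s-c<s$ for $c<s<c(p)$. (c) $c$ is bounded on $M$: the ray $\gamma_p$ eventually exits the compact region enclosed by $M$, and past the exit point the argument of (b) again gives $d_F(M,\gamma_p(s))<s$. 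By (c), every sequence $p_i\to p$ has a subsequence along which $c(p_i)$ converges to some $c_0\in[0,\infty)$. For upper semicontinuity, suppose along such a subsequence $c_0>c(p)$ and pick $c(p)<s<c_0$; then $c(p_i)>s$ for large $i$, so $d_F(M,\exp_{p_i}(s\,\phi\circ\nu(p_i)))=s$ by (b), and letting $i\to\infty$ (using (a) and $\exp_{p_i}(s\,\phi\circ\nu(p_i))\to\exp_p(s\,\phi\circ\nu(p))$) we obtain $d_F(M,\exp_p(s\,\phi\circ\nu(p)))=s$ with $s>c(p)$, contradicting the maximality defining $c(p)$.

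For lower semicontinuity, suppose $p_i\to p$, $c(p_i)\to c_0$ and, for contradiction, $c_0<c(p)$; set $a_i=\exp_{p_i}(c(p_i)\,\phi\circ\nu(p_i))$, so $a_i\to a_0=\exp_p(c_0\,\phi\circ\nu(p))$. Applying Lemma \ref{nnlem1} to each $p_i$ and passing to a subsequence, either (A) every $(p_i,c(p_i))$ is a critical point of $\Psi$, or (B) there exist $q_i\in M$, $q_i\ne p_i$, with $d_F(q_i,a_i)=c(p_i)=d_F(M,a_i)$. By (\ref{n11}), and since $\langle\phi\circ\nu(q),\nu(q)\rangle=F(\nu(q))>0$ forces $\phi\circ\nu(q)\notin T_qM$, the critical set of $\Psi$ equals $\{(q,t):\det(I-tS_F(q))=0\}$, which is closed. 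In case (A) the limit $(p,c_0)$ is therefore a critical point, so $1/c_0$ is an anisotropic principal curvature at $p$, hence $1/c_0\le\lambda_{\max}$; Lemma \ref{lem5} then yields $c(p)\le 1/\lambda_{\max}\le c_0$, a contradiction. In case (B), pass to a further subsequence with $q_i\to q\in M$, so $d_F(q,a_0)=c_0$. If $q\ne p$, then $c_0>0$ and, for $c_0<s<c(p)$, the chain $s=d_F(M,\gamma_p(s))\le d_F(q,\gamma_p(s))\le d_F(q,a_0)+d_F(a_0,\gamma_p(s))=s$ is an equality; by the equality case of Proposition \ref{pro2}(iii), $q$ lies on the line through $p$ in direction $\phi\circ\nu(p)$, at a parameter strictly less than $c_0$, which is impossible: a parameter in $(0,c(p))$ is excluded because $\gamma_p((0,c(p)))$ misses $M$, and a nonpositive parameter is excluded by computing $d_F(q,a_0)=F^*((c_0-c)\phi\circ\nu(p))>c_0$. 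Hence $q\ne p$ cannot occur.

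The remaining case $q=p$ in (B) is the crux. Since $F^*(a_i-q_i)=d_F(q_i,a_i)=c(p_i)$ we may write $a_i-q_i=c(p_i)\,\phi(Y_i)$ with $Y_i=\psi(a_i-q_i)\in S^n$. Because $q_i$ is an interior minimum of $q\mapsto F^*(a_i-q)$ on the boundaryless manifold $M$, the first-variation computation performed in the proof of Lemma \ref{lem5} (formula (\ref{aa})) gives $Y_i\perp T_{q_i}M$, so $Y_i=\pm\nu(q_i)$; pass to a subsequence of constant sign. If $Y_i=\nu(q_i)$, then $a_i=\Psi(q_i,c(p_i))=\Psi(p_i,c(p_i))$ with $(q_i,c(p_i))\ne(p_i,c(p_i))$ and both sequences converging to $(p,c_0)$; hence $\Psi$ is not injective on any neighbourhood of $(p,c_0)$, so by the inverse function theorem $(p,c_0)$ is a critical point of $\Psi$, and one concludes as in case (A) (so in fact $c_0>0$). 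If $Y_i=-\nu(q_i)$, then $q_i-p_i=c(p_i)(\phi\circ\nu(p_i)-\phi(-\nu(q_i)))$, whence $(q_i-p_i)/c(p_i)\to v:=\phi(\nu(p))-\phi(-\nu(p))$; here $\langle v,\nu(p)\rangle=F(\nu(p))+F(-\nu(p))>0$, so $v\ne 0$ and $v\notin T_pM$. If $c_0>0$ this keeps $|q_i-p_i|$ bounded away from $0$, contradicting $q_i-p_i\to 0$; if $c_0=0$, the secant directions $(q_i-p_i)/|q_i-p_i|$ converge to $v/|v|\notin T_pM$, contradicting that secant vectors of a $C^1$ hypersurface accumulate in its tangent space. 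All cases being contradictory, $c_0\ge c(p)$, and together with upper semicontinuity this gives $c_0=c(p)$; hence $c$ is continuous.

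The hard part is the subcase $q=p$ of lower semicontinuity. Because $d_F$ need not be symmetric, the classical Riemannian proof of continuity of the distance-to-cut-locus function does not transfer verbatim, and the first-variation condition determines the minimizing direction only up to sign. The two new points are: excluding the ``wrong'' sign via smoothness of $M$ (secant vectors accumulate on $T_pM$, while $v=\phi(\nu(p))-\phi(-\nu(p))$ is transverse to it, using $\langle\phi(x),x\rangle=F(x)>0$), and, for the ``correct'' sign, using the inverse function theorem to reduce to the assertion that $(p,c_0)$ is a focal parameter, which contradicts $c_0<c(p)\le 1/\lambda_{\max}$ by Lemma \ref{lem5}.
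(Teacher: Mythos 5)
Your proof is correct, and its skeleton is the same as the paper's: a uniform bound on $c$, upper semicontinuity from the maximality in the definition of $c(p)$, and lower semicontinuity via the dichotomy of Lemma \ref{nnlem1}, the sub-cases $q\neq p$ and $q=p$, and the closing appeal to Lemma \ref{lem5}. Where you genuinely diverge is in how you execute the two steps the paper treats tersely. In the sub-case $q\neq p$ the paper simply asserts $\bar{t}\geq c(p)$; you supply the argument via the equality case of the triangle inequality (item (iii) of Proposition \ref{pro2}) together with your observation that $\gamma_p((0,c(p)))$ misses $M$. In the crux sub-case $q=p$ the paper deduces non-injectivity of $\Psi$ near $(p,\bar{t})$ by citing Lemma \ref{lem4}, leaving implicit the key identity $\Psi(q_i,c(p_i))=a_i$; you derive that identity from the first-variation condition, which pins down $Y_i$ only up to sign, and you then exclude $Y_i=-\nu(q_i)$ by the secant-vector transversality argument. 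A shortcut closer to the paper's toolkit is to observe that $a_i$ lies in $D\setminus X(M)$ and invoke Lemma \ref{lem3}, whose segment-crossing argument already forces $Y_i=\nu(q_i)$. Either way your argument is complete, and it in fact fills in the one inference in the paper's own proof that does not read as valid as literally written.
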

\begin{proof}
  Let $p_i\in M$ be such that $\lim_{i\to\infty}p_i=p$, we need to
  prove $\lim_{i\to\infty}c(p_i)=c(p)$. For any $q\in M$, we have
  $$d(q, \exp_q(c(q)\phi(\nu(q))))=c(q)\sqrt{|(\grad_{S^n}F)(\nu(q))|^2+[F(\nu(q))]^2}<\mbox{the diameter of $M$},$$
  so the function $c$ is bounded.

  Firstly, we prove $\limsup_{i\to\infty}c(p_i)\leq c(p)$. For any
  $\varepsilon>0$, there do not exist infinitely many indices $i$
  such that $c(p_i)>c(p)+\varepsilon$. Otherwise, by the definition of $c(p_i)$, we have
  $$d_F(p_i, \exp_p((c(p)+\varepsilon)\phi(\nu(p_i))))=c(p)+\varepsilon,$$
  and, by the continuity of the function $d_F$,
  $d_F(p, \exp_p((c(p)+\varepsilon)\phi(\nu(p))))=c(p)+\varepsilon$, which contradicts the definition of
  $c(p)$. Therefore $\limsup_{i\to\infty}c(p_i)\leq
  c(p)+\varepsilon$, for any $\varepsilon>0$, which proves the
  claim.

  Secondly, we prove $\liminf_{i\to\infty}c(p_i)\geq c(p)$. Let
  $\bar{t}=\liminf_{i\to\infty}c(p_i)$. Consider a subsequence of
  $\{c(p_i)\}$, again denoted by $c(p_i)$, which converges to
  $\bar{t}$. It is obvious an accumulation point of $F$-focal
  points is $F$-focal point, if for any such subsequence, the
  points $\exp_{p_i}(c(p_i)\phi(\nu(p_i)))$ are $F$-focal points of
  $p_i$, then $\exp_p(c(p)\phi(\nu(p)))$ is an $F$-focal point
  of $p$, hence $\bar{t}\geq c(p)$ by Lemma \ref{lem5}.

  Suppose, therefore, there exists a subsequence of $c(p_i)$ (again
  denoted by $c(p_i)$), such that $\exp_{p_i}(c(p_i)\phi(\nu(p_i)))$ is not $F$-focal point of $p_i$. By Lemma \ref{nnlem1}, there
  exists $q_i\in M$ such that $d_F(q_i, \exp_{p_i}(c(p_i)\phi(\nu(p_i))))=c(p_i)$. Taking, if necessary, a subsequence, we may
  suppose that $\lim_{i\to\infty}q_i=q\in M$. If $p\neq q$, by taking
  limit we see that, $d_F(q, \exp_p(\bar{t}\phi(\nu(p))))=d_F(p, \exp_p(\bar{t}\phi(\nu(p))))$, hence $\bar{t}\geq c(p)$. If
$p=q$, then for any
  neighborhood $V=U\times(\bar{t}-\varepsilon, \bar{t}+\varepsilon)$ of $(p,
\bar{t})$, there exists $i$,
  such that $p_i, q_i\in U$ and $c(p_i)\in (\bar{t}-\varepsilon, \bar{t}+\varepsilon)$.
  Choose $\bar{t}-\varepsilon<s<c(p_i)$, then we have $\exp_{p_i}(s\phi(\nu(p_i)))\neq\exp_{q_i}(s\phi(\nu(q_i)))$ by Lemma \ref{lem4},
  so the map $\Psi|_V\colon V\to\Psi(V)$ can not be injective.
  Thus, $(p, \bar{t})$ is a critical point of $\Psi$.
\end{proof}
\section{An integral inequality of compact hypersurfaces}
In this section we derive an integral inequality of compact
hypersurface without boundary embedded in Euclidean space (Theorem
\ref{thma}) which plays an important role in the proof of our main
theorem. First, we recall the following integral formulas of
Minkowski type for compact hypersurfaces in $\mathbb{R}^{n+1}$.

\begin{theorem}\label{th1}
 (\cite{HeL1}, \cite{HeL2})
Let $X\colon M\to\mathbb{R}^{n+1}$ be an $n$-dimensional compact
hypersurface without boundary, $F\colon S^n\to\mathbb{R}^+$ be a
smooth function which satisfies (1), then we have the following
integral formulas of Minkowski type:
\begin{equation}\label{3}
 \int_M(H^F_rF\circ\nu+H^F_{r+1}\langle X, \nu\rangle)\dif A=0,\quad r=0, 1,
 \cdots, n-1.
\end{equation}
\end{theorem}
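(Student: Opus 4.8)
The plan is to derive \eqref{3} from the divergence theorem, imitating the classical proof of the Hsiung--Minkowski formulas with the $F$-Weingarten operator $S_F$ in place of the shape operator and the anisotropic Gauss map $\phi\circ\nu$ in place of $\nu$. First I would introduce, for $0\le r\le n$, the anisotropic Newton transformations $P_r\colon TM\to TM$ of $S_F$ by $P_0=\Id$ and $P_r=\sigma_r\,\Id-P_{r-1}\circ S_F$. Since these are polynomials in $S_F$, the purely algebraic identities $\mathrm{tr}\,P_r=(n-r)\sigma_r$, $\mathrm{tr}(P_r\circ S_F)=(r+1)\sigma_{r+1}$ and $\nabla_v\sigma_r=\mathrm{tr}(P_{r-1}\circ\nabla_v S_F)$ all hold with no symmetry assumption on $S_F$. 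I would also record that $W\circ P_r$ is self-adjoint, equivalently $P_r^{*}\circ W=W\circ P_r$, where $W:=-\dif\nu$: this follows from $S_F=A_F\circ W$ and the self-adjointness of $A_F=D^2F+FI$, via the elementary relation $W\circ P_r(A_F\circ W)=P_r(W\circ A_F)\circ W$.

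Next I would prove the one genuinely analytic ingredient, that each $P_r$ is divergence free: $\sum_i\nabla_i(P_r)_{ik}=0$. This goes by induction on $r$, the case $r=0$ being trivial. Writing $(P_r)_{ik}=\sigma_r\delta_{ik}-\sum_l(P_{r-1})_{il}s_{lk}$ and differentiating, the term with the derivative falling on $P_{r-1}$ vanishes by the inductive hypothesis, and the relation $s_{lki}=s_{lik}$ from \eqref{xv} turns the remaining term $\sum_{i,l}(P_{r-1})_{il}\nabla_i s_{lk}$ into $\mathrm{tr}(P_{r-1}\circ\nabla_k S_F)=\nabla_k\sigma_r$, which cancels $\nabla_k\sigma_r$. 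Then I would record the two identities that make $\phi\circ\nu$ the correct object: since $\grad_{S^n}F$ is tangent to $S^n$, one has $\langle\phi\circ\nu,\nu\rangle=F\circ\nu$, so along $M$ one can write $\phi\circ\nu=Y+(F\circ\nu)\,\nu$ with $Y:=(\grad_{S^n}F)\circ\nu$ tangent to $M$; and since $\dif\phi=A_F$ on $TS^n$, $\dif(\phi\circ\nu)=A_F\circ\dif\nu=-S_F$, which is everywhere tangent to $M$. Writing $f=F\circ\nu$, $g=\langle X,\nu\rangle$ and $X=X^{\top}+g\nu$, the Gauss and Weingarten formulas then yield $\nabla^M_{e_i}X^{\top}=e_i+g\,W(e_i)$, $\nabla^M_{e_i}Y=f\,W(e_i)-S_F(e_i)$, $\grad f=-W(Y)$ and $\grad g=-W(X^{\top})$.

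The computation is then short. I would put $Z_r:=f\,X^{\top}-g\,Y$; in $\nabla^M_{e_i}Z_r$ the two terms proportional to $fg\,W(e_i)$ cancel, leaving $\nabla^M_{e_i}Z_r=f\,e_i+g\,S_F(e_i)+(e_if)\,X^{\top}-(e_ig)\,Y$. Contracting with $P_r$: the divergence-freeness of $P_r$ kills the $\langle\mathrm{div}\,P_r,Z_r\rangle$ term, the $f\,e_i$ and $g\,S_F(e_i)$ parts produce $f\,\mathrm{tr}\,P_r+g\,\mathrm{tr}(P_r\circ S_F)$, and the last two parts combine, using $\grad f=-WY$, $\grad g=-WX^{\top}$ and $P_r^{*}W=WP_r$, into $\langle Y,(P_r^{*}W-WP_r)X^{\top}\rangle=0$. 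Hence $\mathrm{div}(P_r(Z_r))=(n-r)\sigma_r\,(F\circ\nu)+(r+1)\sigma_{r+1}\,\langle X,\nu\rangle$; integrating over the closed manifold $M$, dividing by $(n-r)C^r_n$, and using $H^F_k=\sigma_k/C^k_n$ together with $C^{r+1}_n=\tfrac{n-r}{r+1}C^r_n$, one arrives at \eqref{3}. The step I expect to be the main obstacle is the divergence-freeness of $P_r$: because $S_F=-A_F\circ\dif\nu$ need not be symmetric, one has to be careful about which index of $P_r$ is contracted and to check that the classical induction survives with only the partial symmetry $s_{ijk}=s_{ikj}$ available; the cancellation of the cross terms in the last display is the other point where the positivity and self-adjointness of $A_F=D^2F+FI$ are genuinely used.
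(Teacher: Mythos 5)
The paper does not actually prove Theorem \ref{th1}; it imports it from \cite{HeL1}, \cite{HeL2}. Your argument is correct and is essentially the proof given in those references: anisotropic Newton transformations of $S_F$, the divergence-free property obtained by induction from the Codazzi-type symmetry $s_{ijk}=s_{ikj}$ of \eqref{xv}, and the divergence theorem applied to $P_r\bigl((F\circ\nu)X^{\top}-\langle X,\nu\rangle\,Y\bigr)$. The two delicate points you flag both check out: the induction for $\mathrm{div}\,P_r=0$ only ever contracts the index in which \eqref{xv} provides symmetry, and $W\circ S_F^{\,k}=W(A_F\circ W)^k$ is a palindrome of self-adjoint operators, so $W\circ P_r$ is self-adjoint and the cross terms cancel as claimed.
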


Now, we let $X\colon M\to\R^{n+1}$ be a compact embedded
hypersurface without boundary, then $M$ is a boundary of some
compact domain $D\subset\R^{n+1}$, let $\nu$ be the unit inner
normal vector field of $M$.

\begin{lemma}\label{lem3}
  For any fixed point $y\in D\setminus X(M)$, there exists at least a point
  $p\in M$ such that
  \begin{equation}\label{8}
    y-p=d_F(M, y)\phi\circ\nu(p).
  \end{equation}
\end{lemma}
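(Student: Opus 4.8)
The plan is to realise the infimum defining $d_F(M,y)$ at a point of $M$ by compactness, to identify its direction by a first‑variation computation, and then to use that $y$ lies \emph{inside} $D$ to fix the correct sign of the normal. Since $y\notin X(M)$, the function $q\mapsto d_F(q,y)=F^*(y-q)$ is continuous and strictly positive on the compact manifold $M$, hence attains a minimum $t:=d_F(M,y)>0$ at some $p\in M$. Because $F^*(y-p)=t$, Proposition~\ref{pro2}(iv) gives a unique $Y_0:=\psi(y-p)\in S^n$ with $y-p=F^*(y-p)\,\phi(Y_0)=t\,\phi(Y_0)$; the whole point is to prove $Y_0=\nu(p)$.

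For the first variation I would take an arbitrary smooth curve $q(s)\subset M$ with $q(0)=p$ and write $y-q(s)=h(s)\,\phi(Y(s))$, where $h(s)=F^*(y-q(s))$ and $Y(s)=\psi(y-q(s))\in S^n$ is smooth (by the smoothness of $\psi$ noted after Proposition~\ref{pro2}); thus $h(0)=t$ and $Y(0)=Y_0$. Differentiating yields $-q'(s)=h'(s)\,\phi(Y(s))+h(s)\,\frac{d}{ds}\phi(Y(s))$, and since $\frac{d}{ds}\phi(Y(s))=A_F\circ Y'(s)$ is orthogonal to $Y(s)$ (the computation behind (\ref{ac})) while $\langle\phi(Y(s)),Y(s)\rangle=F(Y(s))$, taking the inner product with $Y(s)$ leaves $h'(s)\,F(Y(s))=-\langle q'(s),Y(s)\rangle$. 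Now $h(s)=d_F(q(s),y)\geq d_F(M,y)=h(0)$, so $s=0$ is a minimum of $h$, whence $h'(0)=0$ and $\langle q'(0),Y_0\rangle=0$. As the curve was arbitrary, $Y_0\perp T_pM$, and since the normal line of $M$ at $p$ is spanned by the unit vector $\nu(p)$ this forces $Y_0=\pm\nu(p)$.

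It then remains to exclude $Y_0=-\nu(p)$, and this is where embeddedness and the choice of the \emph{inner} normal enter. Suppose $Y_0=-\nu(p)$ and consider the segment $\gamma(\tau)=p+\tau(y-p)=p+\tau t\,\phi(Y_0)$, $\tau\in[0,1]$, from $p\in M$ to $y$. Since $(\grad_{S^n}F)_{-\nu(p)}\perp\nu(p)$ we have $\langle\phi(-\nu(p)),\nu(p)\rangle=-F(-\nu(p))<0$, so $\gamma'(0)=t\,\phi(Y_0)$ has negative component along the inner normal $\nu(p)$; hence $\gamma(\tau)$ lies outside $D$ for all small $\tau>0$. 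But $\gamma(1)=y$ lies in the interior of $D$ (it lies in $D$ and not on $M=\partial D$), so by connectedness $\gamma$ must cross $M=\partial D$ at some $\tau^{\ast}\in(0,1)$. Using positive homogeneity of $F^*$ and $F^*(\phi(Y_0))=1$ we then get $d_F(\gamma(\tau^{\ast}),y)=F^*\bigl((1-\tau^{\ast})t\,\phi(Y_0)\bigr)=(1-\tau^{\ast})t<t=d_F(M,y)$, contradicting the definition of $d_F(M,y)$. Therefore $Y_0=\nu(p)$ and $y-p=t\,\phi\circ\nu(p)=d_F(M,y)\,\phi\circ\nu(p)$.

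The only step I expect to require genuine care is this last sign determination: the first‑variation argument alone is sign‑blind (for $y$ outside $D$ it would give $y-p=t\,\phi(-\nu(p))$), so one must feed in the global information that $y$ is enclosed by $M$, which I do via the crossing argument above; everything else is compactness together with a routine differentiation modelled on the proof of Lemma~\ref{lem5}.
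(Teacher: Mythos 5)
Your proposal is correct and follows essentially the same route as the paper: realize the infimum by compactness, use a first-variation computation (via the orthogonality $\langle A_F\circ Y',Y\rangle=0$) to get $Y_0=\pm\nu(p)$, and rule out $Y_0=-\nu(p)$ by noting that $\langle\phi(-\nu(p)),\nu(p)\rangle=-F(-\nu(p))<0$ forces the segment from $p$ to $y$ to re-cross $M$ at a strictly $d_F$-closer point. Your write-up merely makes explicit the homogeneity computation $d_F(\gamma(\tau^*),y)=(1-\tau^*)t<t$ that the paper leaves implicit.
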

\begin{proof}
  From the compactness of $M$ and the continuity of the function
  $d_F$, there exists $p\in M$ such that $d_F(p, y)=\inf\{d_F(q, y)|\; q\in
  M\}$.

  Let $Z\colon M\to S^n$ be defined by
  $$y-q=F^*(y-q)\phi\circ Z(q).$$
  Then we have
  \begin{equation}\label{9}
 d_F(q, y)=\dfrac{\langle y-q, Z(q)\rangle}{F(Z(q))}.
  \end{equation}
  Differentiating (\ref{9}), we get
  \begin{equation}\label{10}
    \dif d_F(q, y)=-\dfrac{\langle \dif q, Z(q)\rangle}{F(Z(q))}.
  \end{equation}
  So, from the minimum of $p$, we get $\langle \dif p,
  Z(p)\rangle=0$, thus $Z(p) = \pm \nu(p)$. If $Z(p)=-\nu(p)$, then $\langle\phi\circ Z(p), \nu(p)\rangle=-F(Z(p))<0$, so the line segment
  connecting  $p$ and $y$ must intersect $X(M)$ at another point $\tilde{p}$, therefore $F^*(y-q)$ can not attain its minimum at $p$.
  Thus, $Z(p)=\nu(p)$ is the unit inner  normal vector.
\end{proof}
\begin{lemma}
  \label{lem4.2}
  Let $X\colon M\to\R^2$ be a simple closed curve and denote its arc
  parameter by $s$. Suppose $c(p)=1/\lambda(p)$ for some point $p\in
  M$, then we must have $\lambda'(p)=0$, where $\lambda$ is the
  anisotropic curvature and $'$ denote derivative with respect to
  the arc parameter.
\end{lemma}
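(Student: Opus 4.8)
The plan is to run the computation in the proof of Lemma~\ref{lem5} one order further. Put $t_0=c(p)$ and let $y=\exp_p(t_0\,\phi\circ\nu(p))=p+t_0\,\phi\circ\nu(p)$ be the $F$-cut point of $p$. By the definition of $c(p)$ we have $d_F(M,y)=t_0$, and this infimum is attained at $p$, because $d_F(p,y)=F^*(t_0\phi(\nu(p)))=t_0$. Hence the function
\[
  h(q)=F^*(y-q)=d_F(q,y),\qquad q\in M,
\]
is smooth near $p$ (since $y-p=t_0\phi(\nu(p))\neq 0$ and $F^*$ is smooth off the origin) and has a local minimum at $p$, with value $t_0$. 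Parametrize $M$ near $p$ by arc length $s$, $\gamma(s_0)=p$, and set $h(u):=h(\gamma(s_0+u))$. For $n=1$ the hypothesis forces $\lambda(p)=1/t_0>0$, and since then the unit tangent $\gamma'(s_0)$ is the eigenvector of $S_F$ for the positive eigenvalue $\lambda(p)$, the computation in the proof of Lemma~\ref{lem5}, specialized to $t=t_0$, applies here.

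As in that proof, introduce $Z\colon M\to S^1$ by $y-\gamma(s)=h(s)\,\phi\circ Z(s)$, $Z(s_0)=\nu(p)$, and use that $F^*$ is a supremum (so the $Z'$-terms drop when one differentiates $F^*(y-\gamma(s))$) to get $h'(u)=-\langle\gamma'(s_0+u),Z(u)\rangle/F(Z(u))$. Since $\langle\gamma'(s_0),\nu(p)\rangle=0$, this gives $h'(0)=0$; differentiating once more and using $(\phi\circ\nu)'=A_F\circ\nu'=-\lambda\gamma'$ together with the fact that $A_F$ acts on $T_{\nu(p)}S^1$ as multiplication by a positive scalar $a(p)$ (recall $n=1$), one gets, specializing the formula in~(\ref{21}),
\[
  h''(0)=\frac{1/t_0-\lambda(p)}{a(p)\,F(\nu(p))}.
\]
By hypothesis $t_0=1/\lambda(p)$, so $h''(0)=0$: this is the borderline case left out of Lemma~\ref{lem5}.

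Since $h$ is smooth, has a local minimum at $p$, and $h'(0)=h''(0)=0$, we must have $h'''(0)=0$, for otherwise $h(u)-h(0)=\tfrac16h'''(0)u^3+o(u^3)$ would change sign at $u=0$. It remains to identify $h'''(0)$ with $\lambda'(p)$ up to a positive factor. Differentiating $h'(u)=-\langle\gamma'(s_0+u),Z(u)\rangle/F(Z(u))$ twice and using $\langle\gamma'(s_0),\nu(p)\rangle=0$ together with $h''(0)=0$, all lower-order terms cancel and
\[
  h'''(0)=-\frac{1}{F(\nu(p))}\Bigl(\langle\gamma'''(s_0),\nu(p)\rangle+2\langle\gamma''(s_0),Z'(0)\rangle+\langle\gamma'(s_0),Z''(0)\rangle\Bigr).
\]
Here $\langle\gamma'''(s_0),\nu(p)\rangle$ is the $s$-derivative of the Euclidean curvature $\langle\gamma'',\nu\rangle$; the vector $Z'(0)=-\tfrac1{t_0}A_F^{-1}\gamma'(s_0)$ is a multiple of $\gamma'(s_0)$, hence orthogonal to $\nu(p)$, so the middle term vanishes; and $Z''(0)$ is extracted from $(\phi\circ Z)''(0)=-\gamma''(s_0)/t_0$ via the second jet of the Wulff map, which in an angular parameter $\theta$ on $S^1$ reads $\phi'(\theta)=(F+F'')\,x'(\theta)$, $\phi''(\theta)=(F+F'')'\,x'(\theta)-(F+F'')\,x(\theta)$. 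Carrying this out, and using that $\lambda=(A_F\circ\nu)\langle\gamma'',\nu\rangle$ so that $\lambda'=(A_F\circ\nu)'\langle\gamma'',\nu\rangle+(A_F\circ\nu)\langle\gamma'',\nu\rangle'$, the three terms reassemble into
\[
  h'''(0)=-\frac{\lambda'(p)}{a(p)\,F(\nu(p))},
\]
and since $a(p),F(\nu(p))>0$ we conclude $\lambda'(p)=0$.

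The delicate point is this last identity: once $h'(0)=h''(0)=0$ have been used, one must check that the surviving third-order quantity is not merely $\langle\gamma'',\nu\rangle'$ but exactly $\lambda'(p)$ (up to the positive factor $a(p)F(\nu(p))$), which is where the second jet of $\phi$, i.e.\ the convexity data $F+F''$ and its derivative, enters and must be tracked carefully. The same fact admits a conceptual reading: the hypothesis $c(p)=1/\lambda(p)$ says that the $F$-cut point of $p$ is its first $F$-focal point, so $M$ stays outside, and is tangent to second order at $p$ to, the scaled Wulff shape $y-t_0W_F=\{q:F^*(y-q)=t_0\}$, which has constant anisotropic curvature $1/t_0=\lambda(p)$; and a plane curve lying on one side of a curve with which it shares its anisotropic curvature at a point of tangency must agree with it to third order there, whereas $\lambda'\equiv 0$ along a Wulff shape.
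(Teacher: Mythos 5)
Your proposal is correct and follows essentially the same route as the paper's own proof: introduce the anisotropic distance function $\tau(q)=F^*(x_0-q)$ to the $F$-cut point, observe it has a minimum at $p$, show $\tau'(p)=0$ always and $\tau''(p)=0$ precisely under the hypothesis $c(p)=1/\lambda(p)$, and then identify $\tau'''(p)=-\lambda'(p)/\bigl(a(\nu(p))F(\nu(p))\bigr)$, forcing $\lambda'(p)=0$. The only difference is that you sketch rather than fully carry out the third-order computation of $W''(p)$ via the second jet of $\phi$, but the formula you assert is exactly the one the paper derives, and you correctly flag this as the step requiring care.
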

\begin{proof}
  Let $x_0=p+c(p)\phi\circ\nu(p)$, define $\tau\colon M\to\R$ by:
  $$\tau(q)=F^*(x_0-q),$$
 then there exists a function $W\colon M\to S^1$ such that
 \begin{equation}
   \label{lem421}
   x_0-q=\tau(q)\phi\circ W(q), \quad W(p)=\nu(p).
 \end{equation}
 From the definition of $c(p)$, we have
 \begin{equation}\label{lem422}
   \tau(q)\geq c(p), \forall q\in M, \quad \tau(p)=c(p).
 \end{equation}
 Differentiating (\ref{lem421}), we get
 \begin{equation}
   \label{lem423}
   -T(q)=\tau'(q)\phi\circ W(q)+\tau(q)a(W(q))W'(q),
 \end{equation}
 where $T$ denotes the tangent vector of $M$, $a=D^2F+F$.

By taking inner product with $W(q)$ in (\ref{lem423}), we obtain
\begin{equation}
  \label{lem424}
  F(W(q))\tau'(q)=-\langle T(q), W(q)\rangle.
\end{equation}
Thus, we have $\tau'(p)=0$ by $W(p)=\nu(p)$. Then, from
(\ref{lem423}),
\begin{equation}
  \label{lem4241}
  W'(p)=-T(p)/(ac(p))=-\lambda(p)T(p)/a=-k(p)T(p)=\nu'(p),
\end{equation} where
$k$ is the curvature of $M$.

Differentiating (\ref{lem424}), we get
\begin{equation}
  \label{lem425}
  (F\circ W)'(q)\tau'(q)+F(W(q))\tau''(q)=-\langle T'(q),
  W(q)\rangle-\langle T(q), W'(q)\rangle.
\end{equation}
Evaluating (\ref{lem425}) at $q=p$, we get
$$F(\nu(p))\tau''(p)=-\langle k(p)\nu(p), \nu(p)\rangle-\langle T(p), -k(p)T(p)\rangle=0,$$
so $\tau''(p)=0$. Thus, differentiating (\ref{lem425}) and
evaluating it at $p$, we obtain
\begin{equation}
  F(\nu(p))\tau'''(p)=-\langle T''(p), \nu(p)\rangle-2\langle T'(p), W'(p)\rangle-\langle T(p),
  W''(p)\rangle.
\end{equation}

Differentiating (\ref{lem423}) and evaluating at $p$, we get
$$-T'(p)=c(p)((a\circ W)'(p)W'(p)+a\circ \nu(p)W''(p)).$$
As $W'(p)=\nu'(p)$, so $(a\circ W)'(p)=(a\circ\nu)'(p)$, through a
direct calculation, we have
\begin{equation}
  \label{lem4242}
  W''(p)=\frac{k(p)(a\circ\nu)'(p)}{a\circ\nu(p)}T(p)-k^2(p)\nu(p).
\end{equation}

From (\ref{lem4241}), (\ref{lem4242}) and $T''=-k^2T+k'\nu$, we
obtain
\begin{equation}
\label{lem426}
F(\nu(p))\tau'''(p)=-k'(p)-\frac{k(p)(a\circ\nu)'(p)}{a\circ\nu(p)}=-\frac{\lambda'(p)}{a\circ\nu(p)}.
\end{equation}
As $\tau(q)\geq\tau(p)$ holds for all $q\in M$, and
$\tau'(p)=\tau''(p)=0$, so we must have $\tau'''(p)=0$, thus
$\lambda'(p)=0$ as expected.
\end{proof}

 Let $f\colon D\to\R$ be an integrable function. From Lemma \ref{lem4}, Lemma \ref{nnlem2} and Lemma \ref{lem3}, we have
the following formula of integration
\begin{equation}\label{5}
  \int_DfdV=\int_M\int_0^{c(p)}f(\exp_p(t\phi(\nu(p))))E(p, t)\dif t\dif A,
\end{equation}
where $E(p, t)$ is given by
\begin{equation}\label{6}
  dV(\exp_p(t\phi(\nu(p))))=E(p, t)\dif t\dif A.
\end{equation}

If $x$ denotes the position vector in $\R^{n+1}$, we have
$\bar{\triangle}|x|^2=2(n+1)$, where $\bar{\triangle}$ is the
Euclidean Laplacian. From the Stokes Theorem, we have
\begin{equation}\label{22}
  -\int_M\langle X, \nu\rangle \dif A=(n+1)V,
\end{equation}
where $V$ the volume of $D$. From (\ref{n11}), we have
$$dV(p+t\phi(\nu(p)))=\det(I-tS_F)F\circ\nu \dif t\dif A=(1-t\lambda_1)\cdots(1-t\lambda_n)F\circ\nu \dif t\dif A.$$
Letting $f\equiv1$ in (\ref{5}) and taking into account that $E(p,
t)=(1-t\lambda_1)\cdots(1-t\lambda_n)F\circ\nu$, we have
\begin{equation}\label{23}
  V=\int_M\int_0^{c(p)}(1-t\lambda_1)\cdots(1-t\lambda_n)F\circ\nu \dif t\dif A.
\end{equation}

\begin{theorem}
  \label{thma}
  Let $X\colon M\to\R^{n+1}$ be a compact hypersurface without boundary embedded in
   Euclidean space. If the anisotropic mean curvature $H^F_1$ of
  $X$ with respect to the unit inner normal $\nu$ is everywhere positive on
  $M$, then we have
  \begin{equation}
    \label{rr}
    \int_M\dfrac{F\circ\nu}{H^F_1}\dif A\geq(n+1)V,
  \end{equation}
  where $V$ is the volume of the compact domain determined by $M$.
  Moreover, the equality holds in (\ref{rr}) if and only if  up to
  translations, $X(M)=\rho W_F$, where $\rho=-1/H^F_1$ is a constant.
\end{theorem}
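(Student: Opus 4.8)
The plan is to establish this anisotropic Heintze--Karcher type inequality by feeding the volume formula (\ref{23}) into the arithmetic--geometric mean inequality, in the spirit of the classical Montiel--Ros argument. Since $H^F_1>0$ everywhere, at each $p\in M$ we have $\sum_i\lambda_i=nH^F_1>0$, so the largest anisotropic principal curvature $\lambda_{\max}(p)$ is positive and Lemma \ref{lem5} applies, giving $c(p)\le 1/\lambda_{\max}(p)\le 1/H^F_1(p)$. Hence, for every $p$ and every $t\in[0,c(p))$, each factor $1-t\lambda_i$ is strictly positive (trivially if $\lambda_i\le 0$; and if $\lambda_i>0$ then $t\lambda_i<\lambda_i/\lambda_{\max}\le 1$), and moreover $1-tH^F_1\ge 0$ for $t\in[0,c(p)]$.

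Applying the AM--GM inequality to the positive numbers $1-t\lambda_1,\dots,1-t\lambda_n$ and using $\frac1n\sum_i(1-t\lambda_i)=1-tH^F_1$, I obtain
\[
(1-t\lambda_1)\cdots(1-t\lambda_n)\le(1-tH^F_1)^n .
\]
Substituting this into (\ref{23}), integrating in $t$, and using $\int_0^{c(p)}(1-tH^F_1)^n\,\dif t=\dfrac{1-(1-c(p)H^F_1)^{n+1}}{(n+1)H^F_1}\le\dfrac{1}{(n+1)H^F_1}$ (valid because $1-tH^F_1\ge 0$ on $[0,c(p)]$), I get
\[
V\le\int_M(F\circ\nu)\left(\int_0^{c(p)}(1-tH^F_1)^n\,\dif t\right)\dif A\le\frac{1}{n+1}\int_M\frac{F\circ\nu}{H^F_1}\,\dif A ,
\]
which is exactly (\ref{rr}).

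For the equality statement, suppose equality holds in (\ref{rr}); then both displayed inequalities are equalities, the second one for almost every $p$. Equality in the AM--GM step forces $1-t\lambda_1=\cdots=1-t\lambda_n$ on $[0,c(p))$ for a.e.\ $p$, hence $\lambda_1=\cdots=\lambda_n$ at a.e.\ point and, by continuity of the anisotropic principal curvatures, at every point of $M$; the common value equals $H^F_1\ne 0$. If $n\ge 2$, Lemma \ref{lemmax} gives $X(M)=\rho W_F$, up to translation, with $\rho=-1/H^F_1$. If $n=1$ the AM--GM step is vacuous, but equality in the second inequality forces $(1-c(p)H^F_1)^{2}=0$, i.e.\ $c(p)=1/H^F_1(p)=1/\lambda(p)$ for a.e.\ $p$, hence everywhere by the continuity of $c$ (Lemma \ref{nnlem2}); then Lemma \ref{lem4.2} gives $\lambda'\equiv0$, so $\lambda$ is constant and Lemma \ref{lemma3.5} yields $X(M)=\rho W_F$. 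Conversely, if $X(M)=\rho W_F$ up to translation, then $H^F_1\equiv 1/\rho$, $c(p)\equiv\rho$, and $M$ is anisotropically umbilic, so every inequality above is an equality.

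I expect the genuinely delicate points to be confined to the equality analysis: one must be careful that the factors $1-t\lambda_i$ really stay positive on the whole interval $[0,c(p))$, which is precisely the role of Lemma \ref{lem5}, and the case $n=1$ has to be treated separately, since there the umbilicity argument degenerates and one instead extracts $c(p)=1/\lambda(p)$ and appeals to Lemma \ref{lem4.2} (after promoting the almost-everywhere identity to everywhere via continuity of $c$). The inequality (\ref{rr}) itself is then an immediate consequence of AM--GM once the positivity of the factors and the bound $c(p)\le 1/\lambda_{\max}$ are in hand.
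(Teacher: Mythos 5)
Your proposal is correct and follows essentially the same route as the paper: Lemma \ref{lem5} to bound $c(p)\le 1/\lambda_{\max}\le 1/H^F_1$, the AM--GM inequality $(1-t\lambda_1)\cdots(1-t\lambda_n)\le(1-tH^F_1)^n$ fed into the coarea formula (\ref{23}), and the equality analysis split into $n\ge 2$ (Lemma \ref{lemmax}) and $n=1$ (Lemma \ref{lem4.2} plus Lemma \ref{lemma3.5}). Your explicit verification that the factors $1-t\lambda_i$ stay positive on $[0,c(p))$ and the a.e.-to-everywhere continuity upgrade are welcome elaborations of steps the paper leaves implicit; the only cosmetic difference is that the paper checks the converse via the Minkowski formula (\ref{3}) rather than by direct inspection of the inequalities.
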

\begin{proof}
  Firstly, if $X(M)=\rho W_F$, then $H^F_1=-1/\rho=\mbox{constant}$. So, by the
  integral equalities of Minkowski type (\ref{3}) and (\ref{22}),
  the equality in (\ref{rr}) holds.

  For $p\in M$, by Lemma \ref{lem5}, we have
  \begin{equation}\label{24}
    c(p)\leq 1/\lambda_{\max}\leq 1/H^F_1(p).
  \end{equation}

  Moreover, if $t\in[0, c(p))$, we have
  \begin{equation}\label{25}
 (1-t\lambda_1)\cdots(1-t\lambda_n)\leq(1-tH^F_1)^n,
  \end{equation}
  the equality holds only at points where $\lambda_1=\cdots=\lambda_n$. Thus, by putting
  (\ref{24}), (\ref{25}) into (\ref{23}), we get
  $$V\leq\int_M\int_0^{1/H^F_1}(1-tH^F_1)^nF\circ\nu \dif t\dif A=\dfrac1{n+1}\int_M\dfrac{F\circ\nu}{H^F_1}\dif A,$$
  and the equality holds if and only if $\lambda_1=\cdots=\lambda_n=1/c(p)$. Therefore,  by Lemma \ref{lemmax}, if
 $n\geq2$ and the equality holds, then up to
translations,
  $X(M)=\rho W_F$, where $\rho=-1/H^F_1$. If $n=1$, then from Lemma \ref{lem4.2}, $\lambda=H^F_1$ is a constant, so from Lemma \ref{lemma3.5},
  up to
translations,
  $X(M)=\rho W_F$, where $\rho=-1/H^F_1$.
\end{proof}
\begin{remark}
  By Lemma \ref{lemma3.5}, Theorem \ref{tm1.2} is true for $n=1$ even without the assumption of
  embedding. So, in order to prove Theorem \ref{tm1.2}, we actually don't need to prove the
  case $n=1$ of Theorem \ref{thma}. We prove it here only for completeness.
\end{remark}

If $F\equiv1$ in Theorem \ref{thma}, then we obtain
\begin{corollary}(\cite{MR}, \cite{Ro})
Let $X\colon M\to\R^{n+1}$ be a compact hypersurface without
boundary embedded in
  Euclidean space. If the mean curvature $H$ of
  $X$ with respect to the unit inner normal $\nu$ is everywhere positive on
  $M$, then we have
  $$\int_M\dfrac{1}{H}\dif A\geq(n+1)V,$$
  where $V$ is the volume of the compact domain determined by $M$.
  Moreover, the equality holds if and only if $X(M)$ is a round
  sphere.
\end{corollary}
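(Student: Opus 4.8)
The plan is to read the inequality off from the volume formula (\ref{23}). On the easy side, if up to a translation $X(M)=\rho W_F$, then by Lemma \ref{lemma3.5} all anisotropic principal curvatures coincide, so $H^F_1=-1/\rho$ is a nonzero constant; the Minkowski formula (\ref{3}) with $r=0$ gives $\int_M F\circ\nu\,\dif A=-H^F_1\int_M\langle X,\nu\rangle\,\dif A$, and combining this with (\ref{22}) yields $\int_M (F\circ\nu)/H^F_1\,\dif A=(n+1)V$, i.e. equality in (\ref{rr}). For the substantive direction the idea is to bound the product $(1-t\lambda_1)\cdots(1-t\lambda_n)$ above by $(1-tH^F_1)^n$ via the arithmetic--geometric mean inequality, then enlarge the range of the inner integral using the bound $c(p)\le 1/H^F_1(p)$ coming from Lemma \ref{lem5}, and finally evaluate the resulting elementary integral.

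Carrying this out: fix $p\in M$. Since $H^F_1(p)>0$ is the average of $\lambda_1,\dots,\lambda_n$, at least one anisotropic principal curvature is positive, so $\lambda_{\max}$ in Lemma \ref{lem5} is defined and $\lambda_{\max}\ge H^F_1(p)>0$; hence $c(p)\le 1/\lambda_{\max}\le 1/H^F_1(p)$. For $t\in[0,c(p))$ every factor $1-t\lambda_i$ is positive (clear if $\lambda_i\le 0$; if $\lambda_i>0$ then $t\lambda_i<c(p)\lambda_{\max}\le 1$), so AM--GM applied to the nonnegative numbers $1-t\lambda_1,\dots,1-t\lambda_n$ gives
$$(1-t\lambda_1)\cdots(1-t\lambda_n)\le\Big(\frac1n\sum_i(1-t\lambda_i)\Big)^n=(1-tH^F_1)^n ,$$
with equality precisely when $\lambda_1=\cdots=\lambda_n$. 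Substituting into (\ref{23}) and enlarging the inner integral from $[0,c(p))$ to $[0,1/H^F_1(p)]$ (legitimate since $(1-tH^F_1)^n\ge 0$ there) gives
$$V\le\int_M\int_0^{1/H^F_1}(1-tH^F_1)^n\,F\circ\nu\,\dif t\,\dif A=\frac1{n+1}\int_M\frac{F\circ\nu}{H^F_1}\,\dif A ,$$
which is (\ref{rr}).

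For the equality statement, equality in (\ref{rr}) forces equality in both steps for almost every $(p,t)$, hence everywhere by continuity: $\lambda_1=\cdots=\lambda_n$ on all of $M$, and $c(p)=1/H^F_1(p)=1/\lambda_{\max}(p)$ for every $p$; since $H^F_1>0$ the common value is nonzero. For $n\ge 2$, Lemma \ref{lemmax} gives directly that this value is constant and $X(M)=\rho W_F$ with $\rho=-1/H^F_1$. For $n=1$, the identity $c(p)=1/\lambda(p)$ at every point together with Lemma \ref{lem4.2} forces $\lambda'\equiv 0$, so $\lambda$ is a nonzero constant and Lemma \ref{lemma3.5} concludes; taking $F\equiv 1$ recovers the stated Corollary. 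The step I expect to be delicate is not this chain of estimates, which is essentially mechanical once (\ref{23}) is available, but rather the legitimacy of the volume formula (\ref{23})/(\ref{5}) — precisely what the $F$-cut-point analysis (Lemmas \ref{lem4}, \ref{nnlem1}, \ref{lem5}, \ref{nnlem2}, \ref{lem3}) was built for — and the separate treatment of $n=1$, where the inequalities alone only yield $\lambda_1=\cdots=\lambda_n$ pointwise and one genuinely needs the third-derivative obstruction of Lemma \ref{lem4.2} to upgrade this to constancy.
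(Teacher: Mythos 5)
Your proposal is correct and follows essentially the same route as the paper: the Corollary is obtained by specializing Theorem \ref{thma} to $F\equiv 1$, and your derivation of Theorem \ref{thma} — the bound $c(p)\le 1/\lambda_{\max}\le 1/H^F_1$ from Lemma \ref{lem5}, the AM--GM estimate $(1-t\lambda_1)\cdots(1-t\lambda_n)\le(1-tH^F_1)^n$ inserted into the volume formula (\ref{23}), the Minkowski formula (\ref{3}) with $r=0$ together with (\ref{22}) for the equality case, and the appeal to Lemma \ref{lemmax} for $n\ge2$ and to Lemmas \ref{lem4.2} and \ref{lemma3.5} for $n=1$ — matches the paper's argument step for step.
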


\section{Proof of Theorem \ref{tm1.2}}
 We divide into two cases:

   {\it Case 1}. $\nu$ is the unit inner normal vector field.
Since $M$ is compact without boundary, one can find a point where
all the
   principal curvatures with respect to $\nu$ are positive.
   It follows from the positive definiteness of $A_F$ that
   all the anisotropic principal curvatures at this point with respect to $\nu$ are positive too.
    Thus $H^F_r$ is a positive constant. From Lemma \ref{llem1}, we have that
  $H^F_1, \cdots, H^F_{r-1}>0$, $(H^F_r)^{1/r}\leq H^F_1$ and
  $H^F_{r-1}\geq (H^F_r)^{(r-1)/r}$. Using Theorem \ref{thma}, we have
  \begin{equation}\label{26}
    (n+1)(H^F_r)^{1/r}V\leq \int_MF\circ\nu \dif A,
  \end{equation}
  and the equality holds if and only if up to
  translations, $X(M)=-\rho W_F$, where $\rho=1/H^F_1$ is a constant.

  Since $H^F_r$ is a positive constant and $(H^F_r)^{1/r}\leq H^F_1$, by Theorem \ref{th1}, we have
  $$\begin{array}
    {rcl}
    0 &=& \int_M(H^F_{r-1}F\circ\nu+H^F_r\langle X,
    \nu\rangle)\dif A\geq\int_M((H^F_r)^{(r-1)/r}F\circ\nu+H^F_r\langle X,
    \nu\rangle)\dif A\\
    &=& (H^F_r)^{(r-1)/r}\int_M(F\circ\nu+(H^F_r)^{1/r}\langle X, \nu\rangle) \dif A.
  \end{array}$$
  As $H^F_r$ is a positive constant, using (\ref{22}) we have
  $$\int_MF\circ\nu \dif A-(n+1)(H^F_r)^{1/r}V=\int_M(F\circ\nu+(H^F_r)^{1/r}\langle X,
  \nu\rangle)\dif A\leq0.$$
  Hence, the equality in (\ref{26}) holds, so up to
  translations, $X(M)=\rho W_F$, where $\rho=-1/H^F_1$ is a constant.

  {\it Case 2}. $\nu$ is the unit outer normal vector field.
  The conclusion follows as in Case 1 by considering the function
  $\tilde{F}\colon S^n\to\R^+$ defined by $\tilde{F}(x)=F(-x)$ instead of $F$.
  This completes the proof of Theorem \ref{tm1.2}.
\bigskip\\
{\bf Acknowledgements} The authors would like to thank Professor F.
Morgan for his helpful suggestions and comments on the original
version of this paper.

\bibliographystyle{amsplain}

\end{document}